\DeclareMathOperator{\Ran}{Ran}
\DeclareMathOperator{\Ker}{Ker}
\DeclareMathOperator{\Tr}{Tr}
\DeclareMathOperator{\rank}{rank}
\DeclareMathOperator{\supp}{supp}
\DeclareMathOperator*{\slim}{s-lim}
\DeclareMathOperator{\clos}{clos}
\DeclareMathOperator{\Span}{span}
\DeclareMathOperator*{\Res}{Res}
\renewcommand\Im{\hbox{{\rm Im}}\,}
\renewcommand\Re{\hbox{{\rm Re}}\,}
\newcommand{\abs}[1]{\lvert#1\rvert}
\newcommand{\Abs}[1]{\left\lvert#1\right\rvert}
\newcommand{\norm}[1]{\lVert#1\rVert}
\newcommand{\bbT}{{\mathbb T}}
\newcommand{\bbR}{{\mathbb R}}
\newcommand{\bbC}{{\mathbb C}}
\newcommand{\bbN}{{\mathbb N}}
\newcommand{\bbZ}{{\mathbb Z}}
\newcommand{\calH}{{\mathcal H}}
\newcommand{\calM}{\mathcal{M}}
\newcommand{\calU}{\mathcal{U}}
\numberwithin{equation}{section}
\theoremstyle{plain}
\newtheorem{theorem}{\bf Theorem}[section]
\newtheorem*{theorem*}{Theorem 1.1$'$}
\newtheorem{lemma}[theorem]{\bf Lemma}
\newtheorem{proposition}[theorem]{\bf Proposition}
\theoremstyle{definition}
\newtheorem{definition}[theorem]{\bf Definition}
\theoremstyle{remark}
\newtheorem*{remark*}{\bf Remark}
\newtheorem{remark}[theorem]{\bf Remark}
\newcommand{\wt}{\widetilde}
\newcommand{\eps}{\varepsilon}
\newcommand{\elplus}{{\ell^{1,\infty}_{++}}}
\newcommand{\talpha}{{S^*\!{\alpha}}}
\newcommand{\1}{\mathbbm{1}}
\begin{document}

\title[Inverse problem for Hankel operators]{An inverse problem for self-adjoint positive Hankel operators}

\author{Patrick Gerard}
\address{Universit\'e Paris-Sud XI, Laboratoire de Math\'ematiques d'Orsay, CNRS, UMR 8628, et Institut Universitaire de France}
\email{Patrick.Gerard@math.u-psud.fr}

\author{Alexander Pushnitski}
\address{Department of Mathematics, King's College London, Strand, London, WC2R~2LS, U.K.}
\email{alexander.pushnitski@kcl.ac.uk}

\subjclass[2010]{47B35}

\keywords{Hankel operators, inverse spectral problem, spectral shift function}

\begin{abstract}
For a sequence $\{\alpha_n\}_{n=0}^\infty$, we consider the
Hankel operator $\Gamma_\alpha$, realised as the infinite matrix  in $\ell^2$
with the entries $\alpha_{n+m}$.
We consider the subclass of bounded Hankel operators defined by
the ``double positivity'' condition $\Gamma_\alpha\geq0$,
$\Gamma_{S^*\alpha}\geq0$; here $S^*\alpha$ is the
shifted sequence $\{\alpha_{n+1}\}_{n=0}^\infty$.
We prove that in this class, the sequence $\alpha$ is uniquely determined by
the spectral shift function $\xi_\alpha$ for the pair
$\Gamma_\alpha^2$, $\Gamma_{S^*\alpha}^2$.
We also describe the class of all functions $\xi_\alpha$ arising
in this way and prove that the map $\alpha\mapsto\xi_\alpha$
is a homeomorphism in appropriate topologies.
\end{abstract}

\maketitle

\section{Introduction}\label{sec.a}

\subsection{Hankel operators}

Let $\alpha=\{\alpha_n\}_{n=0}^\infty$
be a bounded sequence of complex numbers.
The Hankel operator $\Gamma_\alpha$ in $\ell^2\equiv \ell^2(\bbZ_+)$ is
formally defined by
$$
(\Gamma_\alpha x)_n=\sum_{m=0}^\infty \alpha_{n+m}x_m,
\quad
x=\{x_n\}_{n=0}^\infty\in\ell^2.
$$
In other words, $\Gamma_\alpha$ is the ``infinite matrix''
\begin{equation}
\Gamma_\alpha
=
\begin{pmatrix}
\alpha_0 & \alpha_1 & \alpha_2 & \ldots
\\
\alpha_1 & \alpha_2 & \alpha_3 & \ldots
\\
\alpha_2 & \alpha_3 & \alpha_4 & \ldots
\\
\vdots & \vdots & \vdots & \ddots
\end{pmatrix}.
\label{a1}
\end{equation}
Background information on Hankel operators can be found, e.g. in \cite{Peller}.
By Nehari's theorem, $\Gamma_\alpha$ is bounded on $\ell^2$ if and only if there exists
a function $f\in L^\infty(\bbT)$ such that $\hat f(n)=\alpha_n$ for all $n\geq0$; here
$\hat f(n)$ is the $n$'th Fourier coefficient of $f$.
We denote by $e_n$, $n\in\bbZ_+$,  the standard basis in $\ell^2$.
Clearly, $\alpha=\Gamma_\alpha e_0$; thus, if $\Gamma_\alpha$ is bounded, then the
sequence $\alpha$ is automatically in $\ell^2$.
In what follows we will always assume that $\Gamma_\alpha$ is bounded and that $\alpha$
is real-valued; in this case $\Gamma_\alpha$ is self-adjoint.

Let $S$ be the (right) shift operator in $\ell^2$:
$$
S:(x_0,x_1,x_2,\ldots)\mapsto (0,x_0,x_1,x_2,\ldots);
$$
we will also need its adjoint,
$$
S^*:(x_0,x_1,x_2,\ldots)\mapsto (x_1,x_2,x_3,\ldots).
$$
Along with $\Gamma_\alpha$, we will consider the Hankel operator $\Gamma_{\talpha}$,
corresponding to the shifted sequence $S^*\alpha$.
In other words, the matrix of $\Gamma_\talpha$ is obtained from the matrix of $\Gamma_\alpha$
by deleting the first row:
\begin{equation}
\Gamma_\talpha
=
\begin{pmatrix}
\alpha_1 & \alpha_2 & \alpha_3 & \ldots
\\
\alpha_2 & \alpha_3 & \alpha_4 & \ldots
\\
\alpha_3 & \alpha_4 & \alpha_5 & \ldots
\\
\vdots & \vdots & \vdots & \ddots
\end{pmatrix}.
\label{a2}
\end{equation}
More formally, $\Gamma_\alpha$ and $\Gamma_\talpha$ are related by
\begin{equation}
\Gamma_\talpha=S^*\Gamma_\alpha=\Gamma_\alpha S.
\label{a3}
\end{equation}
Another important relation between $\Gamma_\alpha$
and $\Gamma_\talpha$ is the formula
\begin{equation}
\Gamma_\talpha^2=\Gamma_\alpha^2-(\cdot,\alpha)\alpha,
\label{a4}
\end{equation}
where $(\cdot,\cdot)$ is the standard inner product in $\ell^2$
(we follow the convention that the inner product is linear in the
first component and anti-linear in the second component).
Formula \eqref{a4} is an elementary consequence of the matrix
representations  \eqref{a1}, \eqref{a2}, or of identities \eqref{a3}.

\subsection{The class $\elplus$}

We will consider positive bounded Hankel operators:
$$
\Gamma_\alpha\geq0.
$$
Here the positivity is understood, as usual, in the quadratic form sense:
$(\Gamma_\alpha x,x)\geq 0$ for all $x\in \ell^2$.
As a consequence of general results of  \cite{MPT},
a positive bounded Hankel operator can have any
continuous spectrum of multiplicity $\leq2$ and any set of non-zero eigenvalues
of multiplicity one (also zero must be in the spectrum
and if zero is an eigenvalue, then it must have infinite multiplicity).
In this paper, we make some progress towards the description of
\emph{isospectral sets}, i.e.\ of the sets of all positive Hankel operators
with a given spectrum.
To simplify the problem, we will consider a special sub-class of positive Hankel operators,
those that satisfy the \emph{double positivity condition:}
\begin{equation}
\Gamma_\alpha\geq 0
\quad \text{ and }\quad
\Gamma_\talpha\geq 0.
\label{a5}
\end{equation}
In Section~\ref{sec.b} we will prove that under this assumption
the non-zero spectrum of $\Gamma_\alpha$ is simple, i.e.\  has multiplicity one.
(In fact, the same applies to $\Gamma_{S^*\alpha}$ and to $\Gamma_{(S^*)^n\alpha}$ for all $n$.)
This property makes the spectral theory of this class of Hankel operators particularly simple.

In order to set up some notation, we first recall that for positive Hankel operators
the operator norm $\norm{\Gamma_\alpha}$ admits a very simple description
in terms of the sequence $\alpha$.
We denote by
$\ell^{1,\infty}$ the set of all sequences $\{x_n\}_{n=0}^\infty$
of complex numbers such that
$$
\norm{x}_{\ell^{1,\infty}}=\sup_{n\geq0} (n+1) \abs{x_n}<\infty.
$$
By \cite[Theorem 3.1]{Widom}, under the positivity assumption $\Gamma_\alpha\geq0$
one has
\begin{equation}
\tfrac14
\norm{\alpha}_{\ell^{1,\infty}}
\leq
\norm{\Gamma_\alpha}
\leq
\pi
\norm{\alpha}_{\ell^{1,\infty}}
\label{a12}
\end{equation}
(the constants are not written explicitly in \cite{Widom} but are easy
to work out from the argument given there; for completeness we give the proof in Section~\ref{sec.b}).
This motivates the following definition.

\begin{definition}
Let $\elplus$ be the class of all sequences $\alpha\in \ell^{1,\infty}$
such that the double positivity conditions \eqref{a5} are fulfilled.
\end{definition}

The double positivity condition implies, in particular,
that the diagonal elements of the ``matrices''
$\Gamma_\alpha$ and $\Gamma_\talpha$ are non-negative;
hence $\alpha_n\geq0$ for all $n\in\bbZ_+$. Notice that, as will be observed in the proof in Section~\ref{sec.b}, the first inequality \eqref{a12} can then be improved as
\begin{equation}
\tfrac12
\norm{\alpha}_{\ell^{1,\infty}}
\leq
\norm{\Gamma_\alpha}\ .
\label{a12bis}
\end{equation}

\subsection{Main results}

In \cite{GG} for $\alpha\in\elplus$ and under the additional assumption
that $\Gamma_\alpha$ and $\Gamma_\talpha$
are compact, it was proven that the spectrum of $\Gamma_\alpha$ \emph{and} the spectrum of
 $\Gamma_\talpha$ \emph{together} uniquely determine the sequence $\alpha$.
 (In fact, the results of \cite{GG} are not limited to the class $\elplus$.)
 In this paper, we consider this inverse spectral problem without the assumption
 of compactness of $\Gamma_\alpha$.
 In this case it turns out that the spectra of $\Gamma_\alpha$ and $\Gamma_\talpha$ in general
 \emph{do not} determine the sequence $\alpha$ (see Section~\ref{sec.f}).
We show that the correct way to parametrise the spectral data is to consider the
\emph{spectral shift function} (SSF)
$$
\xi_\alpha(\lambda)=\xi(\lambda;\Gamma_\alpha^2,\Gamma_\talpha^2).
$$
We refer  to the Appendix for the background information on the SSF theory.
In our case, since (by \eqref{a4}) the difference $\Gamma_\alpha^2-\Gamma_\talpha^2$
is a positive operator of rank one, the SSF satisfies
$$
0\leq \xi_\alpha(\lambda)\leq1, \quad \text{ a.e.\  $\lambda\in\bbR$.}
$$
Further, since both $\Gamma_\alpha^2\ge \Gamma_\talpha^2$ are non-negative
bounded operators,  the SSF is supported on the compact interval $[0,\norm{\Gamma _\alpha}^2]$.

We consider the map
$$
\alpha\mapsto \xi_\alpha;
$$
this gives rise to a \emph{direct problem} (study the properties of $\xi_\alpha$ for a given $\alpha$)
and to an  \emph{inverse problem} (recover $\alpha$ from $\xi_\alpha$).
For convenience, let us introduce a piece of notation for the space where $\xi_\alpha$ is going to live:

\begin{definition}
Let $\Xi_+$ be the set of all functions $\xi\in L^\infty(\bbR)$ with compact support
in $[0,\infty)$ and with values in the interval $[0,1]$.
\end{definition}

Our main result is

\begin{theorem}\label{thm.main}
The map
\begin{equation}
\elplus\ni\alpha\mapsto \xi_\alpha\in\Xi_+
\label{a5a}
\end{equation}
is a bijection between the sets $\elplus$ and $\Xi_+$.
\end{theorem}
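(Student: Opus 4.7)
\emph{Well-definedness.} By \eqref{a4}, the perturbation $\Gamma_\alpha^2-\Gamma_\talpha^2=(\cdot,\alpha)\alpha$ is positive and of rank one, so the general SSF theory recalled in the Appendix yields $0\le\xi_\alpha\le 1$ almost everywhere, while the inequalities $0\le\Gamma_\talpha^2\le\Gamma_\alpha^2\le\norm{\Gamma_\alpha}^2 I$ confine $\supp\xi_\alpha$ to $[0,\norm{\Gamma_\alpha}^2]$. Hence $\xi_\alpha\in\Xi_+$ and the map \eqref{a5a} is well-defined.

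\emph{Injectivity.} The key tool is Krein's perturbation determinant formula, which in our rank-one setting collapses to
\begin{equation*}
1+((\Gamma_\talpha^2-z)^{-1}\alpha,\alpha)=\exp\int_\bbR\frac{\xi_\alpha(\lambda)}{\lambda-z}\,d\lambda,\qquad z\in\bbC\setminus[0,\infty).
\end{equation*}
Thus $\xi_\alpha$ uniquely determines the Borel transform on the left, and hence the scalar spectral measure $\mu_\alpha(d\lambda)=d(E_{\Gamma_\talpha^2}(\lambda)\alpha,\alpha)$. The remaining task is to recover $\alpha$ from $\mu_\alpha$. By the multiplicity-one property to be proved in Section~\ref{sec.b}, $\alpha$ is cyclic for $\Gamma_\talpha^2$ on $(\Ker\Gamma_\talpha)^\perp$, producing a canonical spectral model in which $\Gamma_\talpha^2$ acts as multiplication by $\lambda$ on $L^2(\mu_\alpha)$ and $\alpha$ is identified with $\1$. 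One then recovers $\Gamma_\alpha$, and thereby $\alpha=\Gamma_\alpha e_0$, by transporting the cyclic vector $e_0$ and the shift $S$ into this model using $\Gamma_\talpha=S^*\Gamma_\alpha$ together with the positivity constraints $\Gamma_\alpha,\Gamma_\talpha\ge 0$, which pin down the remaining data uniquely.

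\emph{Surjectivity.} Given $\xi\in\Xi_+$, the strategy is to invert the construction above. Define
\begin{equation*}
\Phi_\xi(z)=\exp\int_\bbR\frac{\xi(\lambda)}{\lambda-z}\,d\lambda-1,\qquad z\in\bbC\setminus[0,\infty).
\end{equation*}
Since $\xi\in[0,1]$ has compact support, a short calculation gives $\Im\Phi_\xi(z)\ge 0$ for $\Im z>0$ and $\Phi_\xi(z)=O(1/z)$ at infinity, so $\Phi_\xi$ is a Herglotz function admitting the representation $\Phi_\xi(z)=\int d\mu(\lambda)/(\lambda-z)$ for a finite non-negative measure $\mu$ concentrated on $[0,\infty)$. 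In the model space $L^2(\mu)$, set $B$ to be multiplication by $\lambda$, $\varphi=\1$, and $A=B+(\cdot,\varphi)\varphi$. One then constructs an isometry from $L^2(\mu)$ into $\ell^2$ realising $(A,B,\varphi)$ as $(\Gamma_\alpha^2,\Gamma_\talpha^2,\alpha)$ for some sequence $\alpha$; positivity of $A$ and $B$ delivers double positivity, and membership of $\alpha$ in $\ell^{1,\infty}$ will follow from the boundedness of $A$ together with \eqref{a12bis}.

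\emph{Main obstacle.} The decisive step is the realisation in the surjectivity argument: converting abstract spectral data $(A,B,\varphi)$ into a genuine Hankel matrix $\{\alpha_{n+m}\}$. This requires implementing the shift-invariance identity \eqref{a3} at the model level, i.e.\ producing an isometry on $L^2(\mu)$ playing the role of $S$ and consistent with the factorisation $\Gamma_\talpha^2=S^*\Gamma_\alpha^2 S$. The double positivity condition is precisely the rigidity that makes this possible, and a bookkeeping argument — iterative and parallel in spirit to the compact case treated in \cite{GG}, but carried out without compactness — should produce a well-defined $\alpha\in\elplus$. Once the realisation is in place, the identity $\xi_\alpha=\xi$ follows by tracing through the Krein determinant formula in reverse, completing the bijection.
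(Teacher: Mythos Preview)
Your overall strategy is sound and close in spirit to the paper's \emph{second} proofs of injectivity and surjectivity, but the proposal has a genuine gap and a choice that makes life harder than necessary.

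\textbf{The gap.} You correctly identify the Hankel realisation as the decisive step, but you do not supply it. In the paper this step is not bookkeeping: one writes $H_0\Sigma=H$ for a contraction $\Sigma$ (obtained from $H^2\le H_0^2$), checks that $\Sigma\Sigma^*=I-(\cdot,\1)\1$, and then must prove that $(\Sigma^*)^n\to 0$ strongly (Lemma~\ref{lma.e1}). Only then does $f\mapsto\{(f,\Sigma^n\1)\}$ become an isometry into $\ell^2$, and the Hankel structure follows from $H_0\Sigma=\Sigma^*H_0$. The proof of Lemma~\ref{lma.e1} is the technical heart: it passes through an auxiliary contraction $Q$ with $H^{1/2}=QH_0^{1/2}$ (Heinz inequality), shows $\Ker(Q^*Q-I)=\{0\}$ via an invariance argument, and then transfers $(Q^*Q)^n\to 0$ back to $\Sigma^*$. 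Your proposal does not address any of this, and an ``iterative argument parallel to the compact case'' will not suffice --- compactness is precisely what makes the finite-rank case easy.

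\textbf{The choice of spectral measure.} You model $\Gamma_\talpha^2$ as multiplication and $\alpha$ as $\1$, via the measure $\mu_\alpha=d(E_{\Gamma_\talpha^2}\alpha,\alpha)$. The paper instead uses $\wt\rho_\alpha=d(E_{\Gamma_\alpha^2}P_\alpha e_0,P_\alpha e_0)$, modeling $\Gamma_\alpha^2$ as $M_x$ and $P_\alpha e_0$ as $\1$; then $\alpha$ becomes $M_x^{1/2}\1$. This is cleaner for two reasons. First, $\wt\rho_\alpha(\{0\})=0$ by construction, so $\Ker M_x=\{0\}$, which is used directly in Lemma~\ref{lma.e1}; your $\mu$ can have an atom at $0$ (take $\alpha=e_0$, where $\mu=\delta_0$). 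Second, to extract the Hankel entries you need $e_0$ in the model; with the paper's choice it is $\1$, whereas in yours it is $H_0^{-1}\1$ with $H_0=\sqrt{M_\lambda+(\cdot,\1)\1}$, an extra implicit inversion. Your approach can be made to work, but the paper's parametrisation via \eqref{c2b} (equivalently \eqref{a11}) avoids these complications.

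\textbf{Comparison with the paper's first proofs.} The paper also gives alternative arguments you might find instructive. Injectivity follows very cheaply from the explicit recurrence \eqref{d2a} relating $\rho_{S^*\alpha}$ to $\rho_\alpha$, together with \eqref{d3}; this avoids any model-space construction. Surjectivity is obtained independently by approximating an arbitrary $\xi\in\Xi_+$ weakly by step functions $\sum\chi_{[\mu_n^2,\lambda_n^2]}$, invoking the finite-rank surjectivity of \cite{GGinv}, and using the continuity result of Theorem~\ref{thm.g1}.
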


Note that this theorem includes two distinct non-trivial statements:
the injectivity and the surjectivity of the map \eqref{a5a}.
Next,
although the inverse map $\xi_\alpha\mapsto\alpha$ is rather complicated,
some information about $\alpha$ can be obtained directly from $\xi_\alpha$.
Indeed, we have two ``trace formulas''
\begin{gather}
\int_0^\infty \xi_\alpha(\lambda)d\lambda = \sum_{n=0}^\infty \alpha_n^2,
\label{a5b}
\\
\frac2\pi \int_0^\infty \left\{1-\exp\left(-\int_0^\infty
\frac{\xi_\alpha(\lambda)}{\lambda+t^2}d\lambda\right)\right\}dt=\alpha_0,
\notag
\end{gather}
see Theorem~\ref{thm.c2}.
We also have an explicit criterion that allows to decide whether
$\Ker\Gamma_\alpha$ is trivial; this happens if and only if both
$$
\int_0^1 \frac{\xi_\alpha(\lambda)}{\lambda}d\lambda=\infty
\text{ and }
\int_0^1 \frac{1-\xi_\alpha(\lambda)}{\lambda}d\lambda=\infty,
$$
see Theorem~\ref{thm.c1}.

Finally, we prove that the map \eqref{a5a} is a homeomorphism with respect to appropriate
weak topologies, which we now introduce.

Given $R>0$, we define the convex subset
$$\elplus (R):=\{ \alpha \in \elplus : \norm{\alpha }_{\ell ^{1,\infty}}\le R\},$$
which we endow with the weak topology relative to the evaluation linear forms $\alpha \mapsto \alpha _n$.
It is well known that $\elplus (R)$ is a metrizable space, and that convergence  of a sequence $\{\alpha^{(p)}\}_{p=1}^\infty$ to $\alpha$ in this space is equivalent to  $\alpha^{(p)}_n\to\alpha_n$ as $p\to\infty$ for all $n$. Notice that, if $R_1<R_2$, $\elplus (R_1)\subset \elplus (R_2)$, and that the corresponding inclusion  is a homeomorphism. Since $\elplus $ is the union of the increasing family $\{ \elplus (R)\} _{R>0}$, we endow it with the inductive limit topology, which is the strongest topology such that, for every $R>0$, the inclusion of $\elplus (R)$ into $\elplus $ is continuous. A sequence is convergent for this topology if, for some $R$, it is contained in $\elplus (R)$, and if it is convergent for the weak topology of $\elplus (R)$. We note that, as it is straightforward to see, the weak convergence $\alpha^{(p)}\to\alpha$ in $\elplus$ implies
norm convergence of $\alpha^{(p)}$ to $\alpha$ in $\ell^r$ for any $r>1$.

We do the same construction with the convex sets $\Xi _+(C)$, $C>0$, corresponding to those elements of $\Xi _+$ which are supported on $[0,C]$. In this case, $\Xi _+(C)$ is endowed with the $L^\infty $ weak* topology, which is known to be metrizable, the convergence of a sequence
$\{\xi^{(p)}\}_{p=1}^\infty$ to $\xi $ being equivalent to
$$
\int_0^\infty \xi^{(p)}(\lambda)\varphi(\lambda)d\lambda
\to
\int_0^\infty \xi(\lambda)\varphi(\lambda)d\lambda
$$
as $p\to\infty$ for all continuous functions $\varphi$. Since  $\Xi _+$ is the union of the increasing family $\{ \Xi _+(C)\}_{C>0}$, we endow it with the inductive limit topology, for which a sequence is convergent if and only if it is contained in some $\Xi _+(C)$ and if it is convergent for the above weak topology of $\Xi _+(C)$.
\begin{theorem}\label{thm.main2}
The map \eqref{a5a} is a homeomorphism with respect to the
above weak topologies.
\end{theorem}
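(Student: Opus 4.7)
The strategy exploits the inductive-limit structure of the two topologies: since each slice $\elplus(R)$ and $\Xi_+(C)$ is metrizable, it suffices to argue sequentially and to establish a \emph{boundedness dichotomy} --- the forward map sends $\elplus(R)$ into $\Xi_+(C(R))$ for some $C(R)$, and the inverse sends $\Xi_+(C)$ into $\elplus(R(C))$ for some $R(C)$ --- together with sequential continuity of $\alpha\mapsto\xi_\alpha$ on each $\elplus(R)$. Once these are in hand, the classical principle that a continuous bijection from a compact metric space to a Hausdorff space is a homeomorphism closes the proof slice by slice, and the global statement follows from compatibility with the inductive limits.

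The forward half of the dichotomy is immediate from \eqref{a12}: $\alpha\in\elplus(R)$ gives $\norm{\Gamma_\alpha}^2\le \pi^2 R^2$, so $\xi_\alpha\in\Xi_+(\pi^2R^2)$. For sequential continuity on $\elplus(R)$, suppose $\alpha^{(p)}\to\alpha$ componentwise with $\norm{\alpha^{(p)}}_{\ell^{1,\infty}}\le R$. The remark immediately before Theorem~\ref{thm.main2} gives $\alpha^{(p)}\to\alpha$ in $\ell^2$, and since $\Gamma_\alpha e_k=(S^*)^k\alpha$, this upgrades to strong operator convergence $\Gamma_{\alpha^{(p)}}\to\Gamma_\alpha$ (pointwise on the dense span of $\{e_k\}$, with uniform norm bound), and hence $\Gamma_{\alpha^{(p)}}^2\to\Gamma_\alpha^2$ strongly as well. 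From \eqref{a4} and the Krein rank-one perturbation determinant formula
\begin{equation*}
1-((\Gamma_\alpha^2-z)^{-1}\alpha,\alpha)=\exp\!\left(-\int_0^\infty\frac{\xi_\alpha(\lambda)}{\lambda-z}d\lambda\right),\qquad z\in\bbC\setminus[0,\norm{\Gamma_\alpha}^2],
\end{equation*}
strong-resolvent convergence combined with $\alpha^{(p)}\to\alpha$ in $\ell^2$ yields pointwise convergence in $z$ of the left-hand sides, hence of the Cauchy transforms of $\xi_{\alpha^{(p)}}$. Since all $\xi_{\alpha^{(p)}}$ lie in the weak* compact set $\Xi_+(\pi^2R^2)$, Stieltjes inversion identifies every weak* cluster point with $\xi_\alpha$ almost everywhere, which forces weak* convergence of the whole sequence.

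The inverse half of the dichotomy is the \emph{sub-lemma} I regard as the heart of the proof: \emph{if $\xi_\alpha\in\Xi_+(C)$, then $\norm{\Gamma_\alpha}^2\le C$, and therefore $\alpha\in\elplus(2\sqrt C)$ by \eqref{a12bis}.} Indeed, if $\supp\xi_\alpha\subseteq[0,C]$, then the right-hand side of the displayed Krein identity extends analytically (and non-vanishingly) to $\bbC\setminus[0,C]$, which forces the spectral measure $((\Gamma_\alpha^2-\cdot)^{-1}\alpha,\alpha)$ to be supported in $[0,C]$. Upgrading this support statement to $\sigma(\Gamma_\alpha^2)\subseteq[0,C]$ requires that $\alpha$ be a cyclic vector for $\Gamma_\alpha^2$ on $\overline{\Ran\Gamma_\alpha}$ --- a consequence of the simplicity of the non-zero spectrum of $\Gamma_\alpha$ (to be proved in Section~\ref{sec.b}) together with a Perron--Frobenius-type observation that the top eigenvector of $\Gamma_\alpha$ has non-vanishing first coordinate, so that $\alpha$ is not orthogonal to it.

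Granting the sub-lemma, the inverse direction reduces to a routine subsequence-and-compactness argument: if $\xi^{(p)}\to\xi$ in $\Xi_+(C)$, the preimages $\alpha^{(p)},\alpha$ all lie in $\elplus(2\sqrt C)$, which is compact (the $\ell^{1,\infty}$-ball is a product of compact disks, and the positivity constraints are preserved under pointwise limits via the uniform operator bound) and metrizable. Every subsequential cluster point $\beta$ satisfies $\xi_\beta=\xi=\xi_\alpha$ by forward continuity, hence $\beta=\alpha$ by the injectivity part of Theorem~\ref{thm.main}, so the entire sequence $\alpha^{(p)}$ converges to $\alpha$. The main obstacle is therefore the sub-lemma --- specifically the cyclicity step, where the double-positivity hypothesis really earns its keep.
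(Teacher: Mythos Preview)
Your overall architecture matches the paper's proof (restated there as Theorem~\ref{thm.g1}): the same forward boundedness via \eqref{a12}, the same strong-convergence-then-Cauchy-transform argument for forward sequential continuity, and the same compactness/subsequence/injectivity argument for the inverse direction. So the scaffolding is correct.

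The one genuine weak point is your justification of the sub-lemma. You correctly identify that one needs $\alpha$ to be cyclic for $\Gamma_\alpha^2$ on $\overline{\Ran\Gamma_\alpha}$, but the route you propose --- simplicity of the non-zero spectrum \emph{plus} a Perron--Frobenius observation about the top eigenvector --- does not deliver this. First, simplicity alone only says \emph{some} vector is cyclic; non-orthogonality of $\alpha$ to the top eigenvector does not prevent $\alpha$ from being orthogonal to other spectral subspaces, so cyclicity does not follow. Second, for general $\alpha\in\elplus$ the top of the spectrum need not be an eigenvalue at all (the Hilbert matrix of Section~\ref{sec.f} has purely absolutely continuous spectrum $[0,\pi]$), so there is no ``top eigenvector'' on which to run Perron--Frobenius.

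The fix is already in the very section you cite. Theorem~\ref{thm.b1} does not merely prove simplicity; it proves directly that $\calM_\alpha=\clos\Span\{\Gamma_\alpha^n\alpha:n\ge0\}$ equals $\overline{\Ran\Gamma_\alpha}$, i.e.\ that $\alpha$ is cyclic --- and simplicity is then a \emph{consequence}, not an input. So drop the Perron--Frobenius detour and invoke Theorem~\ref{thm.b1} directly. The paper's own treatment of the sub-lemma is the equivalent statement phrased through $e_0$ rather than $\alpha$: from \eqref{c2b} the support of $\rho_\alpha$ lies in $[0,C]$, and since $P_\alpha e_0$ is generating for $\wt\Gamma_\alpha$ (Theorem~\ref{thm.b1}(ii)), $\norm{\Gamma_\alpha}^2\le C$ follows; then \eqref{a12bis} gives $\alpha\in\elplus(2\sqrt C)$. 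Your version via $\alpha$ and the perturbation determinant is equally valid once the cyclicity is sourced correctly.
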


\begin{remark}
There is an interesting analogy between the problem we consider and the
inverse spectral problem for the (singular) Sturm-Liouville operator.
Let $V:\bbR_+\to\bbR$ be a locally integrable function (called the potential); assume that
$V$ is bounded from below.
Consider the differential expression $-\tfrac{d^2}{dx^2}+V(x)$.
Let $H_1$ and $H_2$ be two self-adjoint realisations of this differential
expression in $L^2(\bbR_+)$, corresponding to two distinct choices of
the boundary condition at zero (say, Dirichlet and Neumann). Then the
spectral shift function $\xi(\lambda;H_1,H_2)$ uniquely determines the potential $V$.
If $H_1$, $H_2$ have discrete spectra, this is the classical Borg-Marchenko
result. In general case, this is a result of \cite[Theorem 2.4]{GS}.
\end{remark}

Finally, we would like to mention a number of open questions
related to our results:
\begin{itemize}
\item
How to extend these results to the case of unbounded Hankel operators?
What are the appropriate topologies on the set of sequences $\alpha$ and on
the set of functions $\xi_\alpha$ in this case?
\item
How to extend these results to Hankel operators without the double positivity assumption?
The construction of \cite{GG} suggests that one needs to introduce some additional
spectral variables. An additional problem in this case is that the multiplicity of the
spectrum of $\Gamma_\alpha$ may be non-trivial.
\end{itemize}

\subsection{Some ideas of the proof of Theorem~\ref{thm.main}}
We introduce the spectral measure $\rho_\alpha$ of $\Gamma_\alpha^2$,
corresponding to the element $e_0$:
\begin{equation}
\rho_\alpha(\delta)=(\chi_\delta(\Gamma_\alpha^2) e_0, e_0),
\quad \delta\subset \bbR;
\label{c2a}
\end{equation}
here and in what follows $\chi_\delta$ stands for the characteristic function of
the set $\delta\subset\bbR$. The measure $\rho_\alpha$ is related to $\xi_\alpha$ by
\begin{equation}
z\int_0^\infty \frac{d\rho_\alpha(\lambda)}{\lambda-z}
=
-\exp\left(-\int_0^\infty\frac{\xi_\alpha(\lambda)}{\lambda-z}d\lambda\right)
\label{a11}
\end{equation}
for all $z$ not in the spectrum of $\Gamma_\alpha$; this is an easy calculation
given in Section~\ref{sec.c}.
The proof of injectivity proceeds as follows. Let $\xi_\alpha$ be given;
the relation \eqref{a11} determines
the measure $\rho_\alpha$. Next, we derive a simple recurrence relation
which relates $\rho_{S^*\alpha}$ and $\rho_{\alpha}$:
\begin{equation}
\int_0^\infty \frac{d\rho_\talpha(\lambda)}{\lambda-z}
=
\int_0^\infty \frac{d\rho_\alpha(\lambda)}{\lambda-z}
-
\frac{1}{z}
\left(   \int_0^\infty \frac{d\rho_\alpha(\lambda)}{\lambda-z}  \right)^{-1}
\left(\int_0^\infty \frac{\sqrt{\lambda}d\rho_\alpha(\lambda)}{\lambda-z}\right)^2.
\label{d2a}
\end{equation}
This relation allows one to inductively determine the measures $\rho_{(S^*)^n\alpha}$
for all $n$. This determines the whole sequence $\alpha$ because
\begin{equation}
\alpha_n=(\Gamma_{(S^*)^n\alpha} e_0,e_0)
=
\int_0^\infty \sqrt{\lambda}d\rho_{(S^*)^n\alpha}(\lambda),
\quad n\in \bbN.
\label{d3}
\end{equation}
We also give a second proof of injectivity, which follows the ideas of \cite{GGinv} and \cite{GG},
and is based on the use of the compressed shift operator.

Surjectivity is the hardest statement to prove in Theorem~\ref{thm.main}.
We give two proofs of surjectivity. The first one is based on approximating
a general element $\xi\in\Xi_+$ by elements corresponding to finite rank Hankel operators.
This approach uses the finite rank surjectivity result, which was proven in \cite{GGinv} --- see also \cite{GG}.
Since we use approximation, this approach also relies on Theorem~\ref{thm.main2}.
The second proof of surjectivity is based on the identity \eqref{a11} and on constructing
the measure $\rho_\alpha$, following the path of \cite{MPT}.
Finally, Theorem~\ref{thm.main2} (the continuity of the map \eqref{a5a})
has a surprisingly easy proof based on \eqref{a11},
\eqref{d2a}, \eqref{d3}, on the equivalence of norms \eqref{a12}, and on
trace formula \eqref{a5b}.

\subsection{Some notation}
 Given a sequence $\alpha$, let
\begin{equation}
P_\alpha:\ell^2\to \overline{\Ran\Gamma_\alpha}
\label{a8a}
\end{equation}
be the orthogonal projection.
It is clear from \eqref{a1} and \eqref{a2} that $\Ker \Gamma_\alpha\subset\Ker\Gamma_\talpha$.
Thus, $\overline{\Ran\Gamma_\alpha}$ is an invariant subspace for both operators $\Gamma_\alpha$, $\Gamma_\talpha$,
and on the orthogonal complement to this subspace both operators are equal to  zero. We set
\begin{equation}
\wt\Gamma_\alpha=\Gamma_\alpha|_{\overline{\Ran\Gamma_\alpha}},
\quad
\wt\Gamma_\talpha= \Gamma_\talpha|_{\overline{\Ran\Gamma_\alpha}}.
\label{a9}
\end{equation}
It follows that
$$
\xi_\alpha(\lambda)
=
\xi(\lambda;\wt\Gamma_\alpha^2,\wt\Gamma_\talpha^2).
$$

\section{Operators $\Gamma_\alpha$ with double positivity condition}\label{sec.b}

\subsection{Description in terms of moment sequences}

The second part of the following proposition is borrowed entirely from \cite{Widom}.

\begin{proposition}\label{prp.b1}
Let $\alpha$ be a sequence of real numbers such that the corresponding
Hankel operator $\Gamma_\alpha$ is bounded. Then:
\begin{enumerate}[\rm (i)]
\item
The double positivity condition \eqref{a5} holds true if and only if there exists
a finite positive measure
$\omega$ supported on $[0,1]$ with $\omega(\{1\})=0$
such that $\alpha_n$ can be represented as
\begin{equation}
\alpha_n=\int_0^1 t^n d\omega(t),
\quad
n\geq 0.
\label{a10}
\end{equation}
\item
If $\Gamma_\alpha\geq 0$, then the estimates
\eqref{a12} hold true. If moreover $\Gamma _{\talpha}\geq 0$, then \eqref{a12bis} holds true.
\end{enumerate}
\end{proposition}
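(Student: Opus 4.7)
My plan is to address the two parts in sequence.

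For the sufficiency direction of (i), the verification is direct. If $\alpha_n = \int_0^1 t^n\, d\omega(t)$, then for any finitely supported $x \in \ell^2$,
$$(\Gamma_\alpha x, x) = \sum_{n,m} \alpha_{n+m} x_m \overline{x_n} = \int_0^1 \Bigl| \sum_n \overline{x_n}\, t^n \Bigr|^2 d\omega(t) \geq 0,$$
and similarly $(\Gamma_\talpha x, x) = \int_0^1 t\, \bigl|\sum_n \overline{x_n}\, t^n\bigr|^2 d\omega(t) \geq 0$. Boundedness of these quadratic forms combined with density of finitely supported vectors extends the inequalities to all $x \in \ell^2$.

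For the necessity direction of (i), I would invoke the classical Stieltjes moment theorem. The hypotheses $\Gamma_\alpha \geq 0$ and $\Gamma_\talpha \geq 0$, tested against finitely supported vectors, are precisely the positivity of all truncated Hankel matrices $(\alpha_{j+k})_{j,k=0}^N$ and $(\alpha_{j+k+1})_{j,k=0}^N$, which is the Stieltjes criterion. This yields a finite positive Borel measure $\nu$ on $[0, \infty)$ with $\alpha_n = \int_0^\infty t^n\, d\nu(t)$. To localize the support, I would use that $\alpha = \Gamma_\alpha e_0 \in \ell^2$, whence $\alpha_n \to 0$. Since $\alpha_n \geq a^n \nu((a, \infty))$ for every $a > 1$, this forces $\nu((1, \infty)) = 0$. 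Dominated convergence applied to $\int_{[0,1]} t^n\, d\nu(t)$ then yields $\alpha_n \to \nu(\{1\})$, hence $\nu(\{1\}) = 0$, and \eqref{a10} follows with $\omega = \nu\rvert_{[0,1]}$.

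For (ii), the two-sided bound \eqref{a12} under the single hypothesis $\Gamma_\alpha \geq 0$ is Widom's, and I would reproduce his argument from \cite[Theorem~3.1]{Widom}, keeping track of the constants $1/4$ and $\pi$. For the sharpened lower bound \eqref{a12bis} under double positivity, I would exploit the Hausdorff representation already established. Testing against the unit vector $x = (n+1)^{-1/2}(e_0 + e_1 + \cdots + e_n)$ and applying the AM-GM inequality $\sum_{k=0}^n t^k \geq (n+1) t^{n/2}$ gives
$$(\Gamma_\alpha x, x) = \frac{1}{n+1}\int_0^1 \Bigl(\sum_{k=0}^n t^k\Bigr)^{\!2} d\omega(t) \geq \int_0^1 (n+1)\, t^n\, d\omega(t) = (n+1)\alpha_n,$$
so that $\norm{\Gamma_\alpha} \geq (n+1)\alpha_n$ for every $n$; taking the supremum recovers (and in fact improves) \eqref{a12bis}.

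The main obstacle I anticipate is the necessity direction of (i). The Stieltjes-theorem invocation itself is standard, but the compactness of the support and the exclusion of an atom at $t = 1$ must be extracted from the relatively mild information $\alpha_n \to 0$; the argument sketched above handles this cleanly, provided one is careful that the measure produced by Stieltjes actually integrates all the required moments. The remaining steps are either computational or reduce to existing results in \cite{Widom}.
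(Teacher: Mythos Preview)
Your proposal is correct, and in two places it takes a route that differs from the paper.

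For the necessity direction of (i), the paper invokes the Hamburger moment problem using only $\Gamma_\alpha\geq0$, obtaining a measure on $[-1,1]$, and then uses $\Gamma_{\talpha}\geq0$ (via the positivity of $t\,d\omega$) to force the support into $[0,1]$. You instead feed both positivity conditions into the Stieltjes criterion at once, landing directly on $[0,\infty)$, and then use $\alpha_n\to0$ to cut down to $[0,1]$ and kill the atom at $1$. Both are valid; your path is more economical for (i), while the paper's intermediate Hamburger step (measure on $[-1,1]$) is reused in their proof of the single-positivity bound \eqref{a12}. Since you defer \eqref{a12} to Widom's argument anyway, nothing is lost.

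For \eqref{a12bis}, your argument is genuinely different from the paper's and in fact sharper. The paper tests against the geometric vector $x_n=\tau^n$, extracts the tail estimate $\omega([\tau,1))\leq 2(1-\tau)\norm{\Gamma_\alpha}$, and integrates to get $(n+1)\alpha_n\leq 2\norm{\Gamma_\alpha}$. Your choice of the flat unit vector $x=(n+1)^{-1/2}(e_0+\cdots+e_n)$ together with the AM--GM bound $\sum_{k=0}^n t^k\geq (n+1)t^{n/2}$ yields $(n+1)\alpha_n\leq\norm{\Gamma_\alpha}$ in one line, improving the constant in \eqref{a12bis} from $\tfrac12$ to $1$. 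This is a nice simplification; just be explicit that it relies on the representation \eqref{a10} with $\supp\omega\subset[0,1]$, hence on double positivity, so it does not by itself recover the $\tfrac14$ constant in \eqref{a12} under $\Gamma_\alpha\geq0$ alone.
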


\begin{proof}
(i)
Assume that the representation \eqref{a10} holds with some $\omega$.
Then it is evident that
$$
(\Gamma_\alpha x,x)
=
\sum_{n,m=0}^\infty \int_0^1 t^{n+m}x_n\overline{x_m}\, d\omega(t)
=
\int_0^1 \Abs{\sum_{n=0}^\infty t^n x_n}^2 d\omega(t)\geq0,
$$
thus $\Gamma_\alpha\geq0$.
Further, we have
\begin{equation}
\alpha_{n+1}=\int_0^1 t^{n+1}d\omega(t)=\int_0^1 t^n d\omega_1(t),
\quad\text{ where }\quad
d\omega_1(t)=td\omega(t).
\label{b6}
\end{equation}
Thus, by the same reasoning we also have $\Gamma_{S^*\alpha}\geq0$.

Next, assume that the double positivity condition holds true and $\Gamma_\alpha$ is bounded.
By the solution to the classical Hamburger moment problem (see e.g.
\cite[Section X.1, Example 3]{RS2}), condition $\Gamma_\alpha\geq0$
implies that there exists a measure $\omega\geq0$ on $\bbR$ such that
$$
\alpha_n=\int_{-\infty}^\infty t^n d\omega(t),
\quad
n\geq 0.
$$
The boundedness of $\Gamma_\alpha$ implies that $\alpha\in\ell^2$ and then
the measure $\omega$ is unique (see e.g. \cite[Section X.6, Example 4]{RS2}).
By considering even $n$, it is easy to see that the boundedness of $\Gamma_\alpha$
implies that $\supp\omega\subset[-1,1]$ and $\omega(\{-1\})=\omega(\{1\})=0$.
As the same argument applies to $\Gamma_{S^*\alpha}$, we get
that the measure $\omega_1$, given by \eqref{b6}, is also positive.
Thus, $\supp\omega\subset[0,1]$.

(ii) Assume that $\Gamma_\alpha\geq0$.
The second estimate in \eqref{a12} follows directly from
Hilbert's inequality:
$$
\Abs{\sum_{n,m=0}^N\frac{x_n \overline{x_m}}{n+m+1}}
\leq
\pi \sum_{n=0}^N \abs{x_n}^2.
$$
Let us prove the first estimate in \eqref{a12}.
By the proof of (i), we have
$$
\alpha_n=\int_{-1}^1 t^n d\omega(t),
\quad n\geq0
$$
with some finite positive measure $\omega$ such that
$\omega(\{-1\})=\omega(\{1\})=0$.
Fix $\tau\in(0,1)$ and let $x\in\ell^2$ be the element given by $x_n=\tau^n$, $n\geq0$.
We have
\begin{multline*}
(\Gamma_\alpha x,x)
=
\sum_{n,m=0}^\infty \tau^n \tau^m \int_{-1}^1 t^{n+m} d\omega(t)
=
\int_{-1}^1 \frac{d\omega(t)}{(1-t\tau)^2}
\\
\geq
\int_{\tau}^1 \frac{d\omega(t)}{(1-t\tau)^2}
\geq
\frac1{(1-\tau^2)^2}\int_\tau^1 d\omega(t),
\end{multline*}
and therefore
\begin{multline*}
\omega([\tau,1))
\leq
(1-\tau^2)^2(\Gamma_\alpha x,x)
\leq
(1-\tau^2)^2\norm{\Gamma_\alpha}\norm{x}^2
\\
\leq
(1-\tau^2)^2\norm{\Gamma_\alpha}\frac{1}{1-\tau^2}
=
(1-\tau^2)\norm{\Gamma_\alpha}
\leq
2(1-\tau)\norm{\Gamma_\alpha}.
\end{multline*}
Then
$$
\int_0^1 t^n d\omega(t)
=
n\int_0^1 t^{n-1} \omega([t,1))dt
\leq
2\norm{\Gamma_\alpha}n\int_0^1 t^{n-1}(1-t)dt
=
\frac{2}{n+1} \norm{\Gamma_\alpha}.
$$
In the same way, one proves that
$$
\int_{-1}^0 t^n d\omega(t)
\leq
\frac{2}{n+1} \norm{\Gamma_\alpha},
$$
and so
$$
\alpha_n= \int_{-1}^1 t^n d\omega(t)
\leq
\frac{4}{n+1} \norm{\Gamma_\alpha}.
$$
This proves the first estimate in \eqref{a12}. The same proof yields \eqref{a12bis} in the case of the double positivity condition.
\end{proof}

\begin{remark}\label{rmk}
By inspection of the ``matrix'' \eqref{a1} of $\Gamma_\alpha$ we see that
$\Gamma_{(S^*)^2\alpha}$ is a submatrix obtained by deleting the
first row and the first column. It follows that the condition $\Gamma_\alpha\geq0$
implies $\Gamma_{(S^*)^2\alpha}\geq0$ and then, by iteration, $\Gamma_{(S^*)^{2n}\alpha}\geq0$.
Similarly, $\Gamma_\talpha\geq0$ implies $\Gamma_{(S^*)^{2n+1}\alpha}\geq0$
for all $n\in\bbZ_+$.
Thus, the double positivity condition implies that
$\Gamma_{(S^*)^n \alpha}\geq0$ for all $n\in\bbZ_+$.
This can be rephrased as
$$
\alpha\in\elplus
\quad \Rightarrow \quad
(S^*)^n \alpha\in \elplus
\text{ for all $n\in\bbN$.}
$$
This property also follows directly from Proposition~\ref{prp.b1}.
\end{remark}
\begin{remark}\label{rmkbis}
The sequences $(\alpha _n)$ given by \eqref{a10}, for a positive measure $\omega $ on $[0,1]$, are clearly completely monotonic, namely
$$\forall k\ge 0,\  \forall n\ge 0,\  ((I-S^*)^k\alpha )_n\geq 0\ .$$
In the classical paper \cite{H}, Hausdorff proved that this property is in fact equivalent to the representation \eqref{a10}. Hence elements of $\elplus $ are special solutions of the classical Hausdorff moment problem, precisely those which belong to $\ell ^{1,\infty }$.
\end{remark}
\subsection{The simplicity of the spectrum}
We recall that an element $\psi$ of a Hilbert space $\calH$ is called a
\emph{generating element} of a bounded self-adjoint operator $A$ in $\calH$,
if
$$
\calH=\clos\Span\{A^n \psi\mid n=0,1,2,\dots\}.
$$
If $A$ has a generating element, then it has a \emph{simple spectrum},
i.e.\  it is unitarily equivalent to the operator of multiplication by an independent
variable in some $L^2$ space of scalar valued functions.
More precisely: let $\rho_\psi$ be the measure on $\bbR$ defined by
\begin{equation}
\rho_\psi(\delta)=(\chi_\delta(A)\psi,\psi),
\quad \delta\subset \bbR,
\label{b1}
\end{equation}
and let $U_\psi$ be the operator
\begin{equation}
U_\psi:L^2(\bbR,d\rho_\psi)\to \calH, \quad f\mapsto f(A)\psi.
\label{b2}
\end{equation}
Then $U_\psi$ is unitary and
\begin{equation}
U_\psi^*AU_\psi=M_x, \quad U_\psi\1 =\psi,
\label{b3}
\end{equation}
where $M_x$ is the operator of multiplication by the independent variable $x$,
\begin{equation}
(M_x f)(x)=xf(x), \quad x\in \bbR, \quad f\in L^2(\bbR,d\rho_\psi),
\label{b4}
\end{equation}
and $\1$ is the function identically equal to $1$.

\begin{theorem}\label{thm.b1}
Let $\alpha\in\elplus$.
Denote
\begin{align*}
\calM_\alpha&=\clos\Span\{\Gamma_\alpha^n \alpha \mid n=0,1,2,\dots\},
\\
\calM_\talpha&=\clos\Span\{\Gamma_\talpha^n \alpha \mid n=0,1,2,\dots\}.
\end{align*}
Then:
\begin{enumerate}[\rm (i)]
\item
The subspaces $\calM_\alpha$ and $\calM_\talpha$ coincide and will
henceforth be denoted by $\calM$.
\item
$\overline{\Ran \Gamma_\alpha}=\calM$;
the operator $\wt\Gamma_\alpha$ (see \eqref{a9}) has a simple
spectrum and a generating element $P_\alpha e_0$.
\item
$\overline{\Ran \Gamma_\talpha}\subset\calM$;
the operator $\wt\Gamma_\talpha$ has a simple
spectrum and a generating element $\alpha$.
\end{enumerate}
\end{theorem}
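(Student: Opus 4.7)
The plan is to establish (i) first, then the central equality $\overline{\Ran\Gamma_\alpha}=\calM$, from which (ii) and (iii) drop out at once. For (i), I would use \eqref{a4} in the form
$\Gamma_\talpha^2\Gamma_\alpha^n\alpha = \Gamma_\alpha^{n+2}\alpha-(\Gamma_\alpha^n\alpha,\alpha)\alpha\in\calM_\alpha$,
which shows the closed subspace $\calM_\alpha\subset\overline{\Ran\Gamma_\alpha}$ is $\wt\Gamma_\talpha^2$-invariant. Since $\wt\Gamma_\talpha\ge 0$ is the positive square root of $\wt\Gamma_\talpha^2$, and a closed subspace invariant under a bounded self-adjoint operator is also invariant under its continuous functional calculus, $\calM_\alpha$ is $\wt\Gamma_\talpha$-invariant; applied to $\alpha\in\calM_\alpha$ and iterated this gives $\calM_\talpha\subset\calM_\alpha$. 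The symmetric argument via $\Gamma_\alpha^2=\Gamma_\talpha^2+(\cdot,\alpha)\alpha$ yields the reverse inclusion.

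The key step, and the main obstacle, is the lemma $P_\alpha e_0\in\calM$. To prove it, I would use the orthogonal decomposition $\overline{\Ran\Gamma_\alpha}=\calM\oplus(\calM^\perp\cap\overline{\Ran\Gamma_\alpha})$; both summands are $\wt\Gamma_\alpha$-invariant by self-adjointness. Writing $P_\alpha e_0=x+y$ accordingly, $\wt\Gamma_\alpha x\in\calM$ and $\wt\Gamma_\alpha y\in\calM^\perp$, while $\wt\Gamma_\alpha P_\alpha e_0=\Gamma_\alpha e_0=\alpha\in\calM$; comparing the two components forces $\wt\Gamma_\alpha y=0$. Injectivity of $\wt\Gamma_\alpha$ on $\overline{\Ran\Gamma_\alpha}=(\Ker\Gamma_\alpha)^\perp$ then yields $y=0$, so $P_\alpha e_0=x\in\calM$.

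From the lemma, $S^*\alpha=\Gamma_\talpha e_0=\wt\Gamma_\talpha P_\alpha e_0\in\calM$ by the $\wt\Gamma_\talpha$-invariance of $\calM$ from (i); here one also uses $\Ker\Gamma_\alpha\subset\Ker\Gamma_\talpha$ to identify $\Gamma_\talpha e_0$ with $\Gamma_\talpha P_\alpha e_0$. For $n\ge 1$, \eqref{a3} gives $S^*\Gamma_\alpha^n\alpha=\Gamma_\talpha\Gamma_\alpha^{n-1}\alpha\in\wt\Gamma_\talpha\calM\subset\calM$, and by continuity of $S^*$ this shows that $\calM$ is $S^*$-invariant; hence $(S^*)^n\alpha\in\calM$ for every $n\ge 0$. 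Combined with $\Ran\Gamma_\alpha=\Span\{\Gamma_\alpha e_n:n\ge 0\}=\Span\{(S^*)^n\alpha\}$, this yields $\overline{\Ran\Gamma_\alpha}\subset\calM$, and so $\calM=\overline{\Ran\Gamma_\alpha}$. The remaining assertions now follow at once: $\wt\Gamma_\alpha P_\alpha e_0=\alpha$ together with $\calM_\alpha=\overline{\Ran\Gamma_\alpha}$ makes $P_\alpha e_0$ a generating element of $\wt\Gamma_\alpha$ (so $\wt\Gamma_\alpha$ has simple spectrum), $\calM_\talpha=\overline{\Ran\Gamma_\alpha}$ makes $\alpha$ a generating element of $\wt\Gamma_\talpha$, and $\overline{\Ran\Gamma_\talpha}\subset\overline{\Ran\Gamma_\alpha}=\calM$ is immediate from $\Ker\Gamma_\alpha\subset\Ker\Gamma_\talpha$.
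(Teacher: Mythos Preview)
Your proof is correct. Part (i) is essentially the paper's argument, rephrased: the paper first uses positivity of $\Gamma_\alpha$ and $\Gamma_\talpha$ to rewrite $\calM_\alpha$ and $\calM_\talpha$ using only even powers, then invokes a general rank-one perturbation lemma (Proposition~\ref{prp.b2}) to conclude $\calM_\alpha=\calM_\talpha$; you show directly that $\calM_\alpha$ is $\Gamma_\talpha^2$-invariant and then pass to the square root via functional calculus.

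For the central equality $\overline{\Ran\Gamma_\alpha}=\calM$, your route genuinely differs from the paper's. The paper argues on the complement: if $f\perp\calM$, then $\Gamma_\alpha^2 f=\Gamma_\talpha^2 f$ (since the rank-one perturbation vanishes on $\calM^\perp$), hence by positivity $\Gamma_\alpha f=\Gamma_\talpha f=S^*\Gamma_\alpha f$, and since $\Ker(S^*-I)=\{0\}$ this forces $\Gamma_\alpha f=0$, giving $\calM^\perp\subset\Ker\Gamma_\alpha$ in one stroke. Your approach instead establishes the lemma $P_\alpha e_0\in\calM$ via the injectivity of $\wt\Gamma_\alpha$ on $(\Ker\Gamma_\alpha)^\perp$, deduces that $\calM$ is $S^*$-invariant, and then exploits the Hankel identity $\Gamma_\alpha e_n=(S^*)^n\alpha$ to get $\overline{\Ran\Gamma_\alpha}\subset\calM$. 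The paper's argument is shorter and hinges on the single structural fact $\Ker(S^*-I)=\{0\}$; yours is a few steps longer but isolates two facts of independent interest --- that $P_\alpha e_0\in\calM$ and that $\calM$ is invariant under the backward shift.
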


\begin{remark*}
\begin{enumerate}
\item
The inclusion
$\overline{\Ran \Gamma_\talpha}\subset\calM$ may be strict.
For example, for $\alpha=e_0$ it is easy to see that
$\Ker \Gamma_\alpha\not=\Ker\Gamma_{S^*\alpha}$, and so
$\overline{\Ran \Gamma_\talpha}\not=\overline{\Ran \Gamma_\alpha}$.
For a description of when this situation occurs, see \cite{GG}.
\item
By Remark~\ref{rmk}, the spectra of all operators
$\Gamma_{(S^*)^n\alpha}$ are simple.
\end{enumerate}
\end{remark*}

In order to prove Theorem~\ref{thm.b1},
first we need a general operator theoretic lemma
which in some form goes back at least
to Kato \cite{Kato} but is probably much older:

\begin{proposition}\label{prp.b2}
Let $A_1$ and $A_0$ be bounded self-adjoint operators in a Hilbert space
such that the difference $A_1-A_0$ is a rank one operator:
\begin{equation}
A_1=A_0+(\cdot,\psi)\psi.
\label{b5}
\end{equation}
Denote
$$
\calM_j=\clos\Span\{A_j^n\psi\mid n=0,1,\dots\}, \quad j=0,1.
$$
Then:
\begin{enumerate}[\rm (i)]
\item
The subspaces $\calM_0$ and $\calM_1$ coincide and will
 henceforth be denoted by $\calM$.
\item
$\calM$ is an invariant subspace both for $A_0$ and for $A_1$.
\item
$A_0|_{\calM^\perp}=A_1|_{\calM^\perp}$.
\end{enumerate}
\end{proposition}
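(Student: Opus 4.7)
The strategy is to leverage the rank-one perturbation identity $A_0 x - A_1 x = -(x,\psi)\psi$ to cross-fertilise the two cyclic subspaces, deducing all three assertions from the $A_1$-invariance of $\calM_1$ (together with the symmetric $A_0$-invariance of $\calM_0$).

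I would proceed as follows. \textbf{Step 1.} Observe that $\calM_1$ is $A_1$-invariant: by construction $A_1$ maps each $A_1^n\psi$ to $A_1^{n+1}\psi\in\calM_1$, and the boundedness of $A_1$ extends this invariance to the closure. \textbf{Step 2.} Show that $\calM_1$ is also $A_0$-invariant. For any $x\in\calM_1$, the rank-one formula \eqref{b5} yields $A_0 x = A_1 x - (x,\psi)\psi$. By Step~1, $A_1 x\in\calM_1$, and trivially $\psi=A_1^0\psi\in\calM_1$; hence $A_0 x\in\calM_1$. \textbf{Step 3.} Since $\psi\in\calM_1$ and $\calM_1$ is $A_0$-invariant, the entire orbit $\{A_0^n\psi\}_{n\ge 0}$ lies in $\calM_1$, and therefore $\calM_0\subset\calM_1$. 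Exchanging the roles of $A_0$ and $A_1$ (the identity \eqref{b5} is symmetric up to sign) gives $\calM_1\subset\calM_0$, so the two subspaces coincide, establishing (i) and, together with Steps 1--2, also (ii). \textbf{Step 4.} For part (iii), any $x\in\calM^\perp$ is in particular orthogonal to $\psi\in\calM$, so $(x,\psi)=0$, and \eqref{b5} immediately gives $(A_1-A_0)x=0$.

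There is no genuine obstacle here; the one point requiring a moment of thought is recognising that the rank-one identity is precisely what permits one to trade $A_0$-action for $A_1$-action (modulo a multiple of $\psi\in\calM_1$), which is what closes the loop between the two cyclic subspaces. Note that self-adjointness of $A_0$ and $A_1$ plays no role in the argument; only the boundedness of the two operators and the specific form of their difference are used.
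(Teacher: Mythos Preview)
Your proof is correct and follows essentially the same approach as the paper: both exploit the rank-one identity \eqref{b5} to pass between $A_0$ and $A_1$ modulo a multiple of $\psi$, and both note that part (iii) is immediate from $\psi\in\calM$. The only difference is cosmetic ordering---the paper expands $A_1^n\psi=(A_0+(\cdot,\psi)\psi)^n\psi$ directly to see it lies in $\Span\{A_0^k\psi\}$, whereas you first establish $A_0$-invariance of $\calM_1$ and then deduce $\calM_0\subset\calM_1$; these are two phrasings of the same induction.
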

\begin{proof}
(i) Let $f=A_1^n \psi\in\calM_1$. Using \eqref{b5} and expanding,
we see that $f\in\calM_0$. Thus, $\calM_1\subset \calM_0$;
similarly one obtains $\calM_0\subset \calM_1$.
(ii) It is immediate that $A_0(\calM_0)\subset \calM_0$ and
$A_1(\calM_1)\subset \calM_1$.
(iii) If $f\perp\calM$, then in particular $f\perp \psi$. Now apply \eqref{b5}.
\end{proof}

\begin{proof}[Proof of Theorem~\ref{thm.b1}]
Since $\Gamma_\alpha\geq0$, one can approximate (in the operator norm)
odd powers $\Gamma_\alpha^{2n+1}$ by polynomials involving only even powers of $\Gamma_\alpha$.
The same consideration of course applies to $\Gamma_\talpha$. It follows that
$\calM_\alpha$, $\calM_\talpha$ can be rewritten as
\begin{align*}
\calM_\alpha&=\clos\Span\{\Gamma_\alpha^{2n} \alpha \mid n=0,1,2,\dots\},
\\
\calM_\talpha&=\clos\Span\{\Gamma_\talpha^{2n} \alpha \mid n=0,1,2,\dots\}.
\end{align*}
Now let us apply Proposition~\ref{prp.b2} with $A_0=\Gamma_\talpha^2$,
$A_1=\Gamma_\alpha^2$, $\psi=\alpha$.
Part (i) of the Theorem immediately follows from Proposition~\ref{prp.b2}(i).

Next, let $f\perp \calM$; by Proposition~\ref{prp.b2}(iii), we have
$\Gamma_\alpha^2 f=\Gamma_\talpha^2 f$ and therefore,
by the double positivity condition,
we get $\Gamma_\alpha f=\Gamma_\talpha f$.
By \eqref{a3}, this can be rewritten as
$$
\Gamma_\talpha f=S^*\Gamma_\alpha f=\Gamma_\alpha f.
$$
Since $\Ker(S^*-I)=\{0\}$, we obtain $\Gamma_\alpha f=0$ and $\Gamma_\talpha f=0$.
Thus,
$$
\calM^\perp \subset \Ker \Gamma_\alpha
\quad \text{ and } \quad
\calM^\perp \subset \Ker \Gamma_\talpha,
$$
and therefore
$$
\overline{\Ran \Gamma_\alpha}\subset \calM
\quad \text{ and } \quad
\overline{\Ran \Gamma_\talpha}\subset \calM.
$$
Since $\alpha=\Gamma_\alpha P_\alpha e_0$, we also have $\calM\subset \overline{\Ran \Gamma_\alpha}$.
Thus, we get parts (ii) and (iii) of the Theorem.
\end{proof}

\section{Direct spectral problem}\label{sec.c}

\subsection{The perturbation determinant and the trace formulas}
Let $\alpha\in\elplus$.
For $z\notin[0,\infty)$,
consider the perturbation determinant (see Appendix)
for the pair of operators $\Gamma_\alpha^2$,
$\Gamma_\talpha^2$:
$$
\Delta(z)
=
\Delta_{\Gamma_\alpha^2/\Gamma_\talpha^2}(z)
=
\det((\Gamma_\alpha^2-z)(\Gamma_\talpha^2-z)^{-1}).
$$
By \eqref{a4}, it can be explicitly computed as follows:
\begin{multline}
\Delta(z)^{-1}
=
\det((\Gamma_\talpha^2-z)(\Gamma_\alpha^2-z)^{-1})
=
\det(I+(\Gamma_\talpha^2-\Gamma_\alpha^2)(\Gamma_\alpha^2-z)^{-1})
\\
=\det(I-(\cdot,(\Gamma_\alpha^2-\overline{z})^{-1}\alpha)\alpha)
=
1-((\Gamma_\alpha^2-z)^{-1}\alpha,\alpha).
\label{c0}
\end{multline}
Recalling that $\alpha=\Gamma_\alpha e_0$ and using \eqref{A5},
we obtain
\begin{equation}
(\Gamma_\alpha^2(\Gamma_\alpha^2-z)^{-1}e_0,e_0)
=
1-\exp\left(-\int_0^\infty\frac{\xi_\alpha(\lambda)}{\lambda-z}d\lambda\right),
\quad
z\notin[0,\infty).
\label{c1}
\end{equation}
This is one of the key formulas in our construction. It can be alternatively
written as
\begin{equation}
z((\Gamma_\alpha^2-z)^{-1}e_0,e_0)
=
-\exp\left(-\int_0^\infty\frac{\xi_\alpha(\lambda)}{\lambda-z}d\lambda\right),
\quad
z\notin[0,\infty).
\label{c2}
\end{equation}
Let $\rho_\alpha$ be the measure on $\bbR$ defined by \eqref{c2a}, i.e.\
$$
\rho_\alpha(\delta)=(\chi_\delta(\Gamma_\alpha^2) e_0, e_0),
\quad \delta\subset\bbR.
$$
Using the measure $\rho_\alpha$, we may rewrite \eqref{c1} as
\begin{equation}
\int_0^\infty \frac{\lambda d\rho_\alpha(\lambda)}{\lambda-z}
=
1-\exp\left(-\int_0^\infty\frac{\xi_\alpha(\lambda)}{\lambda-z}d\lambda\right).
\label{c2b}
\end{equation}
This gives a one-to-one correspondence between $\xi_\alpha$ and $\rho_\alpha$.
(In fact, $\rho_\alpha$ is in some respects a more convenient functional parameter than $\xi_\alpha$.)

It will be also convenient to use the following modification of the measure $\rho_\alpha$:
\begin{equation}
\wt \rho_\alpha(\delta)=(\chi_\delta(\wt \Gamma_\alpha^2) P_\alpha e_0,  P_\alpha e_0)
\label{c2c}
\end{equation}
(see \eqref{a8a}, \eqref{a9}). Of course, the difference between the measures
$\rho_\alpha$ and $\wt \rho_\alpha$ is only in the weight at zero:
$$
\wt \rho_\alpha(\delta)=\rho_\alpha(\delta)-\rho_\alpha(\delta\cap\{0\}).
$$
By Theorem~\ref{thm.b1}(ii), the operator $\wt\Gamma_\alpha^2$ is unitarily
equivalent to the operator $M_x$ of multiplication by $x$ in $L^2(\bbR,d\wt\rho_\alpha)$;
thus, $\wt\rho_\alpha$ contains all information about the spectrum of $\wt\Gamma_\alpha$.

\begin{theorem}\label{thm.c2}
Let $\alpha\in\elplus$. Then the identities
\begin{gather}
\int_0^\infty \xi_\alpha(\lambda)d\lambda = \sum_{n=0}^\infty \alpha_n^2,
\label{a6}
\\
\frac2\pi \int_0^\infty \left\{1-\exp\left(-\int_0^\infty \frac{\xi_\alpha(\lambda)}{\lambda+t^2}d\lambda\right)\right\}dt=\alpha_0.
\label{a7}
\end{gather}
hold true.
\end{theorem}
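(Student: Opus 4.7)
The two identities are essentially independent and I would prove them separately.

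For \eqref{a6}, the plan is a one-line application of the standard trace formula of SSF theory. By \eqref{a4}, $\Gamma_\alpha^2-\Gamma_\talpha^2=(\cdot,\alpha)\alpha$ is a rank-one positive operator of trace $\sum_{n=0}^\infty\alpha_n^2$. Since $\xi_\alpha$ is compactly supported on a bounded interval $[0,\norm{\Gamma_\alpha}^2]$, one can apply the trace identity recalled in the Appendix to the affine function $f(\lambda)=\lambda$ (cut off smoothly outside the support of $\xi_\alpha$), yielding
$$
\Tr\bigl(\Gamma_\alpha^2-\Gamma_\talpha^2\bigr)=\int_0^\infty \xi_\alpha(\lambda)\,d\lambda,
$$
and comparing the two expressions for the trace gives \eqref{a6}.

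For \eqref{a7}, the starting point is the identity \eqref{c2} specialised at $z=-t^2$ with $t>0$: this gives
$$
1-\exp\left(-\int_0^\infty \frac{\xi_\alpha(\lambda)}{\lambda+t^2}\,d\lambda\right)
=1-t^2\bigl((\Gamma_\alpha^2+t^2)^{-1}e_0,e_0\bigr)
=\bigl(\Gamma_\alpha^2(\Gamma_\alpha^2+t^2)^{-1}e_0,e_0\bigr),
$$
where the last equality uses the elementary operator identity $I-t^2(A+t^2)^{-1}=A(A+t^2)^{-1}$ applied to $A=\Gamma_\alpha^2\geq0$. The second step is to integrate this identity in $t$ over $(0,\infty)$, and to move the integral inside the scalar product. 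For the pointwise spectral computation I would use the well-known formula
$$
\int_0^\infty \frac{\mu}{\mu+t^2}\,dt=\frac{\pi}{2}\sqrt{\mu},\qquad \mu\geq 0,
$$
obtained by rescaling, so that the spectral theorem yields
$$
\int_0^\infty \Gamma_\alpha^2(\Gamma_\alpha^2+t^2)^{-1}\,dt=\frac{\pi}{2}\,\Gamma_\alpha
$$
(since $\Gamma_\alpha\geq0$, $(\Gamma_\alpha^2)^{1/2}=\Gamma_\alpha$). Pairing with $e_0$ and using $\alpha=\Gamma_\alpha e_0$ gives
$$
\int_0^\infty\bigl(\Gamma_\alpha^2(\Gamma_\alpha^2+t^2)^{-1}e_0,e_0\bigr)\,dt
=\frac{\pi}{2}(\Gamma_\alpha e_0,e_0)=\frac{\pi}{2}\alpha_0,
$$
which is \eqref{a7} after multiplication by $2/\pi$.

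The only subtlety I anticipate is the justification of the exchange of the $t$-integration with the spectral integral. I would carry this out via Fubini applied to the spectral measure $\rho_\alpha$: the inner repeated integral $\int_0^\infty d\rho_\alpha(\lambda)\int_0^\infty \lambda/(\lambda+t^2)\,dt=(\pi/2)\int_0^\infty\sqrt\lambda\,d\rho_\alpha(\lambda)=(\pi/2)\alpha_0$ is manifestly finite (using $\alpha_0=(\Gamma_\alpha e_0,e_0)=\int\sqrt\lambda\,d\rho_\alpha(\lambda)$ as in \eqref{d3}), so Fubini applies to the non-negative integrand, and no further technical work is needed.
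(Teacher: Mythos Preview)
Your proof is correct and follows essentially the same route as the paper: \eqref{a6} via the trace identity \eqref{A3} for the rank-one perturbation, and \eqref{a7} via \eqref{c1}/\eqref{c2} at $z=-t^2$ combined with the integral representation $\sqrt{A}=\tfrac{2}{\pi}\int_0^\infty A(A+t^2)^{-1}\,dt$. The only difference is that you spell out the Fubini justification explicitly, which the paper omits.
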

\begin{proof}
Formula \eqref{a6} is a direct consequence of \eqref{A3} and of the identity
$$
\Tr((\cdot,\alpha)\alpha)=(\alpha,\alpha)=\sum_{n=0}^\infty \alpha_n^2.
$$
In order to prove \eqref{a7}, we first notice that $\alpha_0=(\Gamma_\alpha e_0,e_0)$.
We also use the integral representation for the square root:
$$
\Gamma_\alpha=\sqrt{\Gamma_\alpha^2}
=
\frac{2}{\pi}\int_0^\infty \Gamma_\alpha^2(\Gamma_\alpha^2+t^2)^{-1}dt.
$$
Putting this together and combining with \eqref{c1}, we obtain \eqref{a7}.
\end{proof}

\subsection{The kernel of $\Gamma_\alpha$}
We recall that (due to the Beurling theorem, see \cite{Beurling})
the kernel of a Hankel operator is either trivial or infinite dimensional.
Further, by \eqref{a4}, the kernel of $\Gamma_\alpha$ is infinite dimensional if and only if
the kernel of $\Gamma_\talpha$ is infinite dimensional. Below we give a concrete
criterion for this to happen.

\begin{theorem}\label{thm.c1}
Let $\alpha\in\elplus$;
then the kernels of  $\Gamma_\alpha$, $\Gamma_\talpha$ are trivial if and only
if both of the following conditions hold true:
\begin{equation}
\int_0^1 \frac{\xi_\alpha(\lambda)}{\lambda}d\lambda=\infty,
\qquad
\int_0^1 \frac{1-\xi_\alpha(\lambda)}{\lambda}d\lambda=\infty.
\label{c3}
\end{equation}
\end{theorem}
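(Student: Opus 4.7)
The plan is to interpret the two integral conditions in \eqref{c3} as vanishing-at-zero statements for the atoms of the spectral measures $\rho_\alpha$ (defined in \eqref{c2a}) and $\sigma_\alpha(\delta):=(\chi_\delta(\Gamma_\talpha^2)\alpha,\alpha)$, and then to tie these back to $\Ker\Gamma_\alpha$ via the cyclic structure of Theorem~\ref{thm.b1}.

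Setting $z=-\epsilon$ in \eqref{c2} and letting $\epsilon\downarrow 0$ isolates the atom of $\rho_\alpha$ at zero (by dominated convergence) and yields
\[\rho_\alpha(\{0\})=\exp\Bigl(-\int_0^\infty\frac{\xi_\alpha(\lambda)}{\lambda}d\lambda\Bigr),\]
which, since $\xi_\alpha$ has compact support, makes the first condition of \eqref{c3} equivalent to $\rho_\alpha(\{0\})=0$. For $\sigma_\alpha$, the identity \eqref{a4} together with a rank-one determinant computation parallel to \eqref{c0} gives $\Delta_{\Gamma_\alpha^2/\Gamma_\talpha^2}(z)=1+((\Gamma_\talpha^2-z)^{-1}\alpha,\alpha)$; combining with the SSF formula yields
\[1+\int_0^\infty\frac{d\sigma_\alpha(\lambda)}{\lambda-z}=\exp\Bigl(\int_0^\infty\frac{\xi_\alpha(\lambda)}{\lambda-z}d\lambda\Bigr).\]
Multiplying by $\epsilon$ at $z=-\epsilon$, using $\supp\xi_\alpha\subset[0,C]$ to rewrite the exponent as $\log(1+C/\epsilon)-\int_0^C(1-\xi_\alpha(\lambda))/(\lambda+\epsilon)d\lambda$, and letting $\epsilon\downarrow 0$ produces
\[\sigma_\alpha(\{0\})=C\exp\Bigl(-\int_0^C\frac{1-\xi_\alpha(\lambda)}{\lambda}d\lambda\Bigr),\]
identifying the second condition in \eqref{c3} with $\sigma_\alpha(\{0\})=0$.

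These atoms have operator-theoretic interpretations. Since $\rho_\alpha(\{0\})=\|P_{\Ker\Gamma_\alpha}e_0\|^2$, the first condition is $e_0\in\calM$. Since $\sigma_\alpha(\{0\})=\|P_{\Ker\Gamma_\talpha}\alpha\|^2$, and using that $\alpha\in\calM$, the decomposition $\Ker\Gamma_\talpha=\calM^\perp\oplus\Ker\wt\Gamma_\talpha$ (from the proof of Theorem~\ref{thm.b1}), and the cyclicity of $\alpha$ for $\wt\Gamma_\talpha$ on $\calM$ (Theorem~\ref{thm.b1}(iii)), the second condition amounts to $\Ker\wt\Gamma_\talpha=\{0\}$, i.e., $\Ker\Gamma_\talpha=\calM^\perp=\Ker\Gamma_\alpha$. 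So the theorem reduces to showing $\Ker\Gamma_\alpha=\{0\}$ iff both $e_0\in\calM$ and $\Ker\Gamma_\alpha=\Ker\Gamma_\talpha$. The forward direction is immediate: $\Ker\Gamma_\alpha=\{0\}$ gives $\calM=\ell^2$ and hence $e_0\in\calM$; and $\Ker\Gamma_\alpha\subseteq\Ker\Gamma_\talpha$ differ by at most one dimension while Beurling's theorem forces $\Ker\Gamma_\talpha\in\{\{0\},\infty\}$, so $\Ker\Gamma_\talpha=\{0\}$ as well.

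For the converse I would argue by induction on coordinates. Let $g\in\Ker\Gamma_\alpha$ be nonzero. Since $e_0\in\calM=(\Ker\Gamma_\alpha)^\perp$ we have $g_0=0$, hence $g=S(S^*g)$, and then
\[\Gamma_\talpha(S^*g)=\Gamma_\alpha S(S^*g)=\Gamma_\alpha g=0,\]
so $S^*g\in\Ker\Gamma_\talpha=\Ker\Gamma_\alpha$, which in turn forces $(S^*g)_0=g_1=0$. Iterating yields $g_n=0$ for every $n$, contradicting $g\neq 0$. The main obstacle in the whole argument is the derivation of the $\sigma_\alpha(\{0\})$ formula: the exponent $\int_0^\infty\xi_\alpha(\lambda)/(\lambda+\epsilon)d\lambda$ diverges like $\log(1/\epsilon)$, and the precise cancellation with the prefactor $\epsilon$ must be pinned down using the compact support of $\xi_\alpha$. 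Once this formula is in place, the closing inductive $S^*$-shift argument is short and self-contained.
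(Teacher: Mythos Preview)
Your argument is correct and takes a genuinely different route from the paper's. Both proofs identify the first condition in \eqref{c3} with $e_0\in\calM=\overline{\Ran\Gamma_\alpha}$ via \eqref{c2}. The divergence is in the handling of the second condition. The paper reads it as $e_0\notin\Ran\Gamma_\alpha$: it invokes the criterion (quoted from \cite{GG}) that $\Ker\Gamma_\alpha=\{0\}$ is equivalent to $e_0\in\overline{\Ran\Gamma_\alpha}\setminus\Ran\Gamma_\alpha$, and then checks directly from \eqref{c2} that $e_0\in\Ran\Gamma_\alpha$ (i.e.\ $\lim_{\eps\to0}((\Gamma_\alpha^2+\eps)^{-1}e_0,e_0)<\infty$) is equivalent to convergence of $\int_0^1(1-\xi_\alpha)/\lambda\,d\lambda$. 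You instead compute the perturbation determinant from the $\Gamma_\talpha^2$ side, interpret the second condition as $\sigma_\alpha(\{0\})=0$, equivalently $\Ker\Gamma_\alpha=\Ker\Gamma_\talpha$, and close with the inductive $S^*$-shift argument. Your approach is self-contained (it does not appeal to the external characterisation from \cite{GG}) and makes the role of $\Gamma_\talpha$ and the cyclicity statement of Theorem~\ref{thm.b1}(iii) explicit; the paper's approach is shorter once that characterisation is granted and never needs to introduce the auxiliary measure $\sigma_\alpha$.
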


Of course, the integral $\int_0^1$ in \eqref{c3} can be replaced by $\int_0^a$ for
any $a>0$.

\begin{proof}
Exactly as in Theorem 4 of \cite{GG},  we have
$$
\Ker \Gamma_\alpha=\{0\}
\quad\Longleftrightarrow\quad
e_0\in \overline{\Ran\Gamma_\alpha}\setminus{\Ran\Gamma_\alpha}.
$$
Let us express the latter condition in terms of the function in the l.h.s. of \eqref{c2}.
Using the spectral theorem for self-adjoint operators, it is easy to see that the strong limit
$$
\slim_{\eps\to0+} \eps(\Gamma_\alpha^2+\eps)^{-1}
$$
exists and is equal to the orthogonal projection onto $\Ker \Gamma_\alpha$.
Thus, using \eqref{c2},
\begin{multline*}
e_0\in \overline{\Ran\Gamma_\alpha}
\quad\Longleftrightarrow\quad
\lim_{\eps\to 0+}\eps((\Gamma_\alpha^2+\eps)^{-1}e_0,e_0)=0
\\
\quad\Longleftrightarrow\quad
\lim_{\eps\to 0+}\int_0^\infty\frac{\xi_\alpha(\lambda)}{\lambda+\eps}d\lambda=\infty
\quad\Longleftrightarrow\quad
\int_0^\infty\frac{\xi_\alpha(\lambda)}{\lambda}d\lambda=\infty,
\end{multline*}
so we obtain the first of the conditions \eqref{c3}.
Next,
$$
e_0\in \Ran\Gamma_\alpha
\quad\Longleftrightarrow\quad
\norm{\Gamma_\alpha^{-1}e_0}^2<\infty
\quad\Longleftrightarrow\quad
\lim_{\eps\to 0+} ((\Gamma_\alpha^2+\eps)^{-1}e_0,e_0)<\infty
$$
and therefore, using \eqref{c2},
$$
e_0\in \Ran\Gamma_\alpha
\quad\Longleftrightarrow\quad
\lim_{\eps\to 0+}\frac1\eps\exp\left(-\int_0^\infty\frac{\xi_\alpha(\lambda)}{\lambda+\eps}d\lambda\right)<\infty.
$$
Finally,
$$
\frac1\eps\exp\left(-\int_0^\infty\frac{\xi_\alpha(\lambda)}{\lambda+\eps}d\lambda\right)
=
\exp\left(\int_0^1\frac{1-\xi_\alpha(\lambda)}{\lambda+\eps}d\lambda\right)\frac{1}{1+\eps}
\exp\left(-\int_1^\infty\frac{\xi_\alpha(\lambda)}{\lambda+\eps}d\lambda\right),
$$
and therefore
$$
e_0\in \Ran\Gamma_\alpha
\quad\Longleftrightarrow\quad
\int_0^1 \frac{1-\xi_\alpha(\lambda)}{\lambda}d\lambda<\infty.
$$
This yields the second condition \eqref{c3}.
\end{proof}

\begin{remark}
In \cite[Theorem 2]{GG} it was proven that in the case of the compact operators
$\Gamma_\alpha$, $\Gamma_{S^*\alpha}$ with the eigenvalues $\{\lambda_j\}_{j=1}^\infty$
and $\{\mu_j\}_{j=1}^\infty$, the kernel of $\Gamma_\alpha$ is trivial if and only if
both of the following conditions hold:
\begin{equation}
\sum_{j=1}^\infty\left(1-\frac{\mu_j^2}{\lambda_j^2}\right)=\infty,
\qquad
\sup_N\frac{1}{\lambda_{N+1}^2} \prod_{j=1}^N \frac{\mu_j^2}{\lambda_j^2}=\infty.
\label{c4}
\end{equation}
In this case we have
$$
\xi_\alpha(\lambda)
=
\begin{cases}
1, & \mu_j^2\leq \lambda\leq \lambda_j^2 \text{ for some $j$,}
\\
0, & \text{ otherwise.}
\end{cases}
$$
Using this formula and some elementary manipulations,
it is not difficult to check that \eqref{c4} is in fact equivalent to \eqref{c3}.
\end{remark}

\section{Inverse spectral problem: uniqueness}\label{sec.d}

\begin{theorem}\label{thm.d1}
The map
$$
\elplus\ni\alpha\longmapsto \xi_\alpha\in\Xi_+
$$
is injective, i.e.\ the sequence $\alpha$ is uniquely determined by the function $\xi_\alpha$.
\end{theorem}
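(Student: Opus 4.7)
The plan is to follow the roadmap sketched by the authors at the end of Section~\ref{sec.a}: starting from $\xi_\alpha$, reconstruct successively the spectral measures $\rho_{(S^*)^n\alpha}$ for every $n\in\bbZ_+$, and then read off each $\alpha_n$ from the identity \eqref{d3}.

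The first step is to recover $\rho_\alpha$ from $\xi_\alpha$. Formula \eqref{a11} (equivalently \eqref{c2b}) expresses the Cauchy transform $z\mapsto\int_0^\infty\tfrac{d\rho_\alpha(\lambda)}{\lambda-z}$ as an explicit function of $\xi_\alpha$ alone for every $z\in\bbC\setminus[0,\infty)$. Since the Cauchy transform uniquely determines a finite positive Borel measure on $\bbR$ (Stieltjes inversion), $\rho_\alpha$ is determined by $\xi_\alpha$.

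The heart of the argument is to derive the recurrence \eqref{d2a} relating $\rho_{\talpha}$ to $\rho_\alpha$. The input is the rank-one identity \eqref{a4}. Set $R_0(z):=(\Gamma_\alpha^2-z)^{-1}$ and $R_1(z):=(\Gamma_\talpha^2-z)^{-1}$; the second resolvent identity applied to $e_0$ gives
$$R_1(z)e_0 = R_0(z)e_0 + \bigl(R_1(z)e_0,\alpha\bigr)\,R_0(z)\alpha.$$
Taking inner products with $\alpha$ and with $e_0$, and using $\alpha=\Gamma_\alpha e_0$ together with the fact that $\Gamma_\alpha=\sqrt{\Gamma_\alpha^2}$ commutes with $R_0(z)$, one expresses every quantity in terms of the integrals $\int\tfrac{d\rho_\alpha(\lambda)}{\lambda-z}$ and $\int\tfrac{\sqrt{\lambda}\,d\rho_\alpha(\lambda)}{\lambda-z}$. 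With the auxiliary identity $(R_0(z)\alpha,\alpha)=1+z\int\tfrac{d\rho_\alpha(\lambda)}{\lambda-z}$, an elementary manipulation produces \eqref{d2a}. Stieltjes inversion then recovers $\rho_{\talpha}$ uniquely. By Remark~\ref{rmk}, $S^*\alpha\in\elplus$ as well, so the procedure may be iterated to give $\rho_{(S^*)^n\alpha}$ for every $n$. Finally, since $\Gamma_{(S^*)^n\alpha}\ge 0$, the functional calculus yields
$$\alpha_n=\bigl((S^*)^n\alpha\bigr)_0=(\Gamma_{(S^*)^n\alpha}e_0,e_0)=\int_0^\infty\sqrt{\lambda}\,d\rho_{(S^*)^n\alpha}(\lambda),$$
completing the reconstruction of $\alpha$.

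The step requiring the most care is the derivation of \eqref{d2a}: one must check that the denominator $\int\tfrac{d\rho_\alpha(\lambda)}{\lambda-z}$ appearing in \eqref{d2a} does not vanish on $\bbC\setminus[0,\infty)$ so that the recurrence is meaningful. For $z$ in the open upper half-plane this is immediate from the strict positivity of $\Im\int\tfrac{d\rho_\alpha(\lambda)}{\lambda-z}$; the identity then extends to $\bbC\setminus[0,\infty)$ by analytic continuation, and by continuity up to the positive real axis wherever the Cauchy transforms admit boundary values. Everything else is bookkeeping built on Stieltjes inversion and the rank-one perturbation formula.
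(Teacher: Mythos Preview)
Your proposal is correct and follows essentially the same route as the paper's first proof: recover $\rho_\alpha$ from $\xi_\alpha$ via \eqref{c2b}, derive the recurrence \eqref{d2a} from the rank-one resolvent identity applied to \eqref{a4}, iterate, and then read off $\alpha_n$ from \eqref{d3}. The paper obtains \eqref{d2a} by invoking the explicit rank-one resolvent formula \eqref{A6}--\eqref{A7} directly rather than rederiving it, but the substance is identical; your extra remark on the non-vanishing of the denominator is a welcome clarification (in the paper this is implicit in the identity $D_\alpha(z)=-z\int\tfrac{d\rho_\alpha}{\lambda-z}$, which is the reciprocal of a perturbation determinant and hence non-zero off the spectrum).
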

\begin{proof}[First proof]
Let us derive a recurrence relation for $\rho_\talpha$ in terms of $\rho_\alpha$.
From \eqref{a4} by the resolvent identity \eqref{A6}, \eqref{A7} we get
\begin{equation}
(\Gamma_\talpha^2-z)^{-1}-(\Gamma_\alpha^2-z)^{-1}
=
\frac1{D_\alpha(z)}(\cdot,(\Gamma_\alpha^2-\overline{z})^{-1}\alpha)(\Gamma_\alpha^2-z)^{-1}\alpha,
\label{d1}
\end{equation}
where
\begin{multline*}
D_\alpha(z)
=
1-((\Gamma_\alpha^2-z)^{-1}\alpha,\alpha)
=
1-((\Gamma_\alpha^2-z)^{-1}\Gamma_\alpha e_0,\Gamma_\alpha e_0)
\\
=
-z((\Gamma_\alpha^2-z)^{-1} e_0,e_0)
=
-z \int_0^\infty \frac{d\rho_\alpha(\lambda)}{\lambda-z}.
\end{multline*}
Evaluating the quadratic form of both sides of \eqref{d1} on the element $e_0$,
we obtain
\begin{equation}
\int_0^\infty \frac{d\rho_\talpha(\lambda)}{\lambda-z}
-
\int_0^\infty \frac{d\rho_\alpha(\lambda)}{\lambda-z}
=
-
\frac{1}{z}
\left(   \int_0^\infty \frac{d\rho_\alpha(\lambda)}{\lambda-z}  \right)^{-1}
\left(\int_0^\infty \frac{\sqrt{\lambda}d\rho_\alpha(\lambda)}{\lambda-z}\right)^2.
\label{d2}
\end{equation}
Now we can complete the proof.
It is well known that a finite measure on $\bbR$ is uniquely determined
by its Cauchy transform.
Thus, by \eqref{c2b}, the SSF $\xi_\alpha$ uniquely determines the measure $\rho_\alpha$.
Identity \eqref{d2} allows one to determine $\rho_{(S^*)^n \alpha}$ iteratively for all $n\in\bbZ_+$.
Finally, identity
\eqref{d3}
uniquely determines the whole sequence $\alpha$.
\end{proof}
\begin{proof}[Second proof]
Let $\wt\rho_\alpha$ be the measure given by \eqref{c2c}. By \eqref{c2b},
the measure $\wt\rho_\alpha$ is uniquely determined by $\xi_\alpha$; thus, it
suffices to prove that the sequence $\alpha$ is uniquely determined by
the measure $\wt\rho_\alpha$.

For a given sequence $\alpha$, let the unitary operator $U_\alpha$
be given by
\begin{equation}
U_\alpha: L^2(\bbR,d\wt\rho_\alpha)\to \overline{\Ran \Gamma_\alpha},
\quad
f\mapsto f(\wt \Gamma_\alpha^2)P_\alpha e_0.
\label{d6}
\end{equation}
We have (cf. \eqref{b1}--\eqref{b4})
\begin{equation}
U_\alpha^* \wt\Gamma_\alpha^2 U_\alpha=M_x,
\label{d4}
\end{equation}
where $M_x$ is the operator of multiplication by the independent variable in $L^2(\bbR,d\wt\rho_\alpha)$.
Applying the unitary transformation $U_\alpha$ to \eqref{a4},
we get
\begin{equation}
U_\alpha^*\wt \Gamma_\talpha ^2 U_\alpha=M_x-(\cdot, M_x^{1/2}\1)M_x^{1/2}\1.
\label{d5}
\end{equation}
The r.h.s. is an operator in $L^2(\bbR,d\wt\rho_\alpha)$ given by an explicit formula
independent of $\alpha$.
Thus, the operator $U_\alpha^*\wt \Gamma_\talpha ^2 U_\alpha$
(and therefore its square root $U_\alpha^*\wt \Gamma_\talpha U_\alpha$)
is uniquely determined by the measure $\wt\rho_\alpha$.

We will use the the compressed shift
operator
$P_\alpha S P_\alpha^*$.
Denote
$$
\Sigma
=
U_\alpha^*
P_\alpha S P_\alpha^*
U_\alpha.
$$
By \eqref{a3}, we have
$$
\Sigma^* M_{x}^{1/2}=U_\alpha^*\widetilde \Gamma_\talpha U_\alpha,
$$
and therefore the operator $\Sigma^*$ is uniquely determined by $\wt\rho_\alpha$.

By inspection of \eqref{a1}, we find
$$
\alpha_n
=
((S^*)^n \Gamma_\alpha e_0,e_0)
=
((P_\alpha S^* P_\alpha^*)^n \wt\Gamma_\alpha P_\alpha e_0,P_\alpha e_0).
$$
Applying $U_\alpha$, we get
$$
\alpha_n=((\Sigma^*)^nM_{x}^{1/2}\1,\1);
$$
the r.h.s. is uniquely determined by $\wt \rho_\alpha$, and therefore by $\xi_\alpha$.
\end{proof}

\section{Continuity of the map $\alpha\mapsto\xi_\alpha$ and its inverse}\label{sec.g}

Here we prove Theorem~\ref{thm.main2}. It will be useful for us to rephrase it
in a slightly different way; the statement below also includes that the range of $\alpha \mapsto \xi _\alpha $ is closed.
We refer to the introduction for the definition of weak convergence in $\elplus$ and $\Xi_+$. Notice that property \eqref{a12}
implies that $\elplus (R)$ is mapped into $\Xi _+(\pi ^2R^2)$. Conversely,  if $\alpha $ belongs to the inverse image of $\Xi_+(C)$,
using \eqref{c2b}, it is easy to see that the support of  $\rho_{\alpha}$ is contained in $[0,C]$. Since $e_0$ is a generating element of
$\Gamma _\alpha $, we infer that  $\norm{\Gamma _\alpha }\le \sqrt C$, and, by \eqref{a12bis}, that $\alpha \in \elplus(2\sqrt C)$.
In order to prove continuity, it is therefore enough to deal with sequences in $\elplus(R)$ and in $\Xi_+(C)$.

\begin{theorem}\label{thm.g1}
\begin{enumerate}[\rm (i)]
\item
Let $\alpha^{(p)}$ be a sequence of elements in $\elplus (R)$ and let $\alpha$ be a sequence
of real numbers. Assume that  $\alpha^{(p)}_n\to\alpha_n$ as $p\to\infty$ for all $n$.
Then $\alpha\in\elplus$ and $\xi_{\alpha^{(p)}}\to\xi_\alpha$
weakly in $\Xi_+$.
\item
Let $\alpha^{(p)}$ be a sequence of elements of $\elplus$.
If $\xi_{\alpha^{(p)}}\to\xi $ weakly in $\Xi_+$,
then  there exists $\alpha \in \elplus $ such that
$\alpha^{(p)}\to\alpha$ weakly in $\elplus$ as $p\to\infty$, and $\xi =\xi _\alpha $.
\end{enumerate}
\end{theorem}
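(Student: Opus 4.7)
The discussion preceding Theorem~\ref{thm.g1} already reduces matters to uniform bounds: the map $\alpha \mapsto \xi_\alpha$ sends $\elplus(R)$ into $\Xi_+(\pi^2 R^2)$ by \eqref{a12}, and its restriction to the preimage of $\Xi_+(C)$ takes values in $\elplus(2\sqrt{C})$ by \eqref{c2b} and \eqref{a12bis}. I may therefore work with sequences constrained to fixed $\elplus(R)$ or $\Xi_+(C)$.

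\textbf{For part (i),} the bound $(n+1)\alpha^{(p)}_n \le R$ and the non-negativity of the quadratic forms $(\Gamma_{\alpha^{(p)}} x, x)$ and $(\Gamma_{S^*\alpha^{(p)}} x, x)$ on finitely supported $x$ pass to the pointwise limit, giving $\alpha \in \elplus(R)$. Next I claim $\Gamma_{\alpha^{(p)}} \to \Gamma_\alpha$ strongly: uniform boundedness comes from \eqref{a12}, and the action on each basis vector $e_k$ converges in $\ell^2$ by dominated convergence, since the entries $\alpha^{(p)}_{k+n}$ are dominated in $n$ by $R/(k+n+1) \in \ell^2$. The same applies to $\Gamma_{S^*\alpha^{(p)}}$, hence to the squares, and hence to their resolvents off the joint spectrum: for every $z \notin [0,\infty)$,
$$
((\Gamma_{\alpha^{(p)}}^2 - z)^{-1} e_0, e_0) \longrightarrow ((\Gamma_\alpha^2 - z)^{-1} e_0, e_0).
$$
The identity \eqref{c2} rewrites these quantities in terms of the Cauchy transforms of $\xi_{\alpha^{(p)}}$, yielding pointwise convergence of the latter on $\bbC \setminus [0,\infty)$. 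The family $\{\xi_{\alpha^{(p)}}\}$ lies in the weak-$*$ compact metrizable set $\Xi_+(\pi^2 R^2)$; any weak-$*$ cluster point $\eta$ then has the same Cauchy transform as $\xi_\alpha$ (the test function $1/(\lambda - z)$ being continuous on the common compact support), so by uniqueness of functions determined by their Cauchy transform, $\eta = \xi_\alpha$, and the full sequence $\xi_{\alpha^{(p)}}$ converges weakly-$*$ to $\xi_\alpha$.

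\textbf{For part (ii),} suppose $\xi_{\alpha^{(p)}} \to \xi$ weakly-$*$ in some $\Xi_+(C)$. Then $\alpha^{(p)} \in \elplus(2\sqrt{C})$, which is weakly compact and metrizable (being a closed subset of the Tychonoff cube $\prod_{n \ge 0} [0, 2\sqrt{C}/(n+1)]$). Extract a pointwise convergent subsequence $\alpha^{(p_k)} \to \alpha$. Part~(i) gives $\alpha \in \elplus$ and $\xi_{\alpha^{(p_k)}} \to \xi_\alpha$, whence $\xi = \xi_\alpha$; the injectivity Theorem~\ref{thm.d1} then pins down $\alpha$ as the unique preimage of $\xi$. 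Every weakly convergent subsequence of $\{\alpha^{(p)}\}$ must therefore share the same limit $\alpha$, and the standard subsequence argument in the compact space $\elplus(2\sqrt{C})$ upgrades this to weak convergence of the full sequence.

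\textbf{The main obstacle} is the step in Part~(i) of promoting pointwise convergence of Cauchy transforms to weak-$*$ convergence of the $\xi_{\alpha^{(p)}}$; it is handled by combining weak-$*$ compactness of the uniformly supported, uniformly bounded family with the uniqueness of the Cauchy transform. Once Part~(i) is in place, Part~(ii) is an essentially formal consequence of injectivity and the compactness of $\elplus(2\sqrt{C})$.
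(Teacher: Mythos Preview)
Your proof is correct and follows essentially the same approach as the paper's: strong operator convergence of the Hankel operators, hence strong resolvent convergence, then the key identity \eqref{c2} to pass to Cauchy transforms of the $\xi_{\alpha^{(p)}}$, with part~(ii) reduced to part~(i) plus compactness and the injectivity Theorem~\ref{thm.d1}. Your treatment is in places slightly more explicit than the paper's (the dominated-convergence argument for strong convergence on basis vectors, and the compactness-plus-uniqueness justification for upgrading Cauchy-transform convergence to weak-$*$ convergence), but the underlying route is the same.
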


\begin{proof}
(i) Let $\alpha^{(p)}$, $\alpha$ be as in the hypothesis. As we already mentioned,
 the supports of $\xi_{\alpha^{(p)}}$ all lie in the compact set $[0,\pi ^2R^2]$.

Next, it is straightforward to see that the weak convergence implies that
$\alpha\in\ell^{1,\infty}$ and therefore $\Gamma_\alpha$ is a bounded operator.
Further, since $\ell^{1,\infty}\subset \ell^r$ for any $r>1$,
it is easy to see that $\alpha\in\ell^2$ and we have the norm convergence
$\norm{\alpha^{(p)}-\alpha}_{\ell^2}\to0$ as $p\to\infty$.
We conclude that if
$f$ is a finite linear combination of the elements $\{e_n\}_{n=0}^\infty$, then
$$
\norm{\Gamma_{\alpha^{(p)}}f-\Gamma_\alpha f}_{\ell^2}\to0
\quad
\text{ as $p\to\infty$.}
$$
Using the uniform boundedness of the norms $\norm{\Gamma_{\alpha^{(p)}}}$,
we obtain that
$\Gamma_{\alpha^{(p)}}\to\Gamma_\alpha$ in strong operator topology.
Similarly, $\Gamma_{S^*\alpha^{(p)}}\to\Gamma_{S^*\alpha}$
strongly.
It follows that $\Gamma_\alpha\geq0$ and $\Gamma_{S^*\alpha}\geq0$ and
so $\alpha\in\elplus$.

The strong convergence of operators yields
(see e.g.\  \cite[Theorem VIII.24(a)]{RS1})
the strong convergence of resolvents.
Thus,
$$
(\Gamma_{\alpha^{(p)}}^2(\Gamma_{\alpha^{(p)}}^2-z)^{-1}e_0,e_0)
\to
(\Gamma_{\alpha}^2(\Gamma_{\alpha}^2-z)^{-1}e_0,e_0),
\quad
\text{ as $p\to\infty$,}
$$
for all $\Im z\not=0$.
By \eqref{c1}, we get
$$
\int_0^\infty \frac{\xi_{\alpha^{(p)}}(\lambda)}{\lambda-z}d\lambda
\to
\int_0^\infty \frac{\xi_{\alpha}(\lambda)}{\lambda-z}d\lambda
\quad
\text{ as $p\to\infty$;}
$$
this yields the weak convergence  $\xi_{\alpha^{(p)}}\to\xi_\alpha$.

(ii)
Let $\alpha^{(p)}$ be as in the hypothesis.
By our definition of weak convergence in $\Xi_+$, we have
$\supp\xi_{\alpha^{(p)}}\subset[0,C]$ for all $p$ and some $C>0$. As we already observed,
 $\rho _{\alpha ^{(p)}}$ is supported on $[0,C]$ and $\alpha ^{(p)}$ belongs to $\elplus (2\sqrt C)$.
Consequently, by a diagonal argument, there exists a subsequence $\alpha ^{(p')}$ such that
$\alpha^{(p')}_n\to\alpha_n$ as $p'\to\infty$ for all $n$. Using part i) of the theorem, we infer that
$\alpha \in \elplus $ and that $\xi_{\alpha^{(p')}}\to\xi_\alpha$
weakly in $\Xi_+$, so that $\xi =\xi _{\alpha }$. By the uniqueness Theorem~\ref{thm.d1}, we conclude that $\alpha $ is unique,
thus the whole sequence $\alpha ^{(p)}$ weakly converges to $\alpha $ in $\elplus $.
\end{proof}

\begin{remark}
Using the same arguments as in the above proof, one could similarly  describe the weak continuity of the map $\alpha\mapsto \rho_\alpha$.
\end{remark}

\section{Inverse spectral problem: surjectivity}\label{sec.e}

Here we prove that the map \eqref{a5a} is surjective. We give two proofs.

\subsection{First proof}
The first proof is based on the following result of \cite{GGinv} --- see also \cite{GG} --- about the finite
rank Hankel operators:

\begin{proposition}[\cite{GGinv}, Corollary 3] \label{prp.GG}
Let $N\in\bbN$ and let $\{\lambda_n\}_{n=1}^N$ and $\{\mu_n\}_{n=1}^N$
be two finite sequences of real numbers such that
$$
0< \mu_N<\lambda_N<\mu_{N-1}<\cdots<\mu_1<\lambda_1.
$$
Then there exists $\alpha\in\elplus$ such that the non-zero eigenvalues of
$\Gamma_\alpha$ coincide with $\{\lambda_n\}_{n=1}^N$
and the non-zero eigenvalues of $\Gamma_{S^*\alpha}$
coincide with $\{\mu_n\}_{n=1}^N$.
\end{proposition}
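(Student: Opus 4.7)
The plan is to recognise the statement as a $2N$-parameter inverse spectral problem for a pair of rank-$N$ operators and to solve it by combining the explicit Hausdorff parametrisation of finite-rank elements of $\elplus$ with a topological properness argument together with the uniqueness theorem already proved in the paper. By Proposition~\ref{prp.b1} and Remark~\ref{rmkbis}, any finite-rank $\alpha\in\elplus$ with $\rank\Gamma_\alpha=N$ is of the form
$$
\alpha_n=\sum_{j=1}^N c_j t_j^n,\qquad c_j>0,\quad 0<t_N<\cdots<t_1<1,
$$
so such sequences are parametrised by the open $2N$-dimensional cell
$\mathcal O_N=\{(c,t)\in (0,\infty)^N\times (0,1)^N:t_1>\cdots>t_N\}$, while the admissible spectral data form the open $2N$-dimensional cell
$\mathcal I_N=\{(\lambda,\mu)\in(0,\infty)^{2N}:0<\mu_N<\lambda_N<\cdots<\mu_1<\lambda_1\}$. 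The proposition is then precisely the assertion that the evaluation map $\Phi:\mathcal O_N\to\mathcal I_N$, sending $(c,t)$ to the non-zero eigenvalues of the pair $(\Gamma_\alpha,\Gamma_\talpha)$, is surjective.

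The finite-rank picture makes the setup completely explicit: on $\overline{\Ran\Gamma_\alpha}$ the operator $\Gamma_\alpha$ is unitarily equivalent to the positive $N\times N$ Cauchy-type matrix with entries $\sqrt{c_ic_j}/(1-t_it_j)$, and $\Gamma_\talpha$ is the conjugate of this matrix by $\mathrm{diag}(\sqrt{t_j})$. That $\Phi$ indeed lands in $\mathcal I_N$ follows from \eqref{a4}: the difference $\Gamma_\alpha^2-\Gamma_\talpha^2=(\cdot,\alpha)\alpha$ is a strictly positive rank-one perturbation of $\Gamma_\talpha^2$, and the standard secular equation for rank-one perturbations (with genericity ensured by $c_j>0$ and the distinctness of the $t_j$) yields strict interlacing of the squared eigenvalues, hence of the $\lambda_j$ and $\mu_j$ themselves.

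Injectivity of $\Phi$ is then immediate from the uniqueness result of Section~\ref{sec.d}: for finite-rank $\alpha$ the spectral shift function $\xi_\alpha$ is explicitly the characteristic function of $\bigcup_j[\mu_j^2,\lambda_j^2]$, so the eigenvalue data determines $\xi_\alpha$ and Theorem~\ref{thm.d1} determines $\alpha$, hence $(c,t)$. For surjectivity I would argue by properness and invariance of domain: $\Phi$ is smooth, and whenever a sequence $(c^{(p)},t^{(p)})$ leaves every compact subset of $\mathcal O_N$ (two $t_j^{(p)}$ coalesce, some $t_j^{(p)}$ exits $(0,1)$, or some $c_j^{(p)}$ exits $(0,\infty)$), the corresponding eigenvalue data must leave every compact subset of $\mathcal I_N$. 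A continuous injective proper map between connected open manifolds of the same dimension is a homeomorphism onto its image, and the image is then both open and closed in the target, hence equal to $\mathcal I_N$.

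The delicate step is the properness/boundary analysis: one must verify directly that every mode of parameter degeneration really produces a corresponding degeneration in the spectral data, with no hidden escape route keeping $\Phi(c^{(p)},t^{(p)})$ inside a compact part of $\mathcal I_N$. A clean way to implement this is via the recurrence~\eqref{d2a} and formula~\eqref{d3}, which express the $t_j$ and $c_j$ as measurable functions of the atomic measures $\rho_{(S^*)^k\alpha}$ for $k=0,1,\dots,N-1$; tracking how those atoms move as $(\lambda,\mu)$ approaches $\partial\mathcal I_N$ pins down exactly how $(c,t)$ escapes $\mathcal O_N$, and vice versa.
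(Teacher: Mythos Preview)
The paper does not prove Proposition~\ref{prp.GG}; it is quoted from~\cite{GGinv}, where the argument proceeds by an explicit construction of the rational symbol from the spectral data rather than by topology. So there is no in-paper proof to compare with, and your route is necessarily different.

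Your strategy --- parametrise rank-$N$ elements of $\elplus$ by the cell $\mathcal O_N$, show $\Phi$ lands in $\mathcal I_N$ via strict interlacing, obtain injectivity from Theorem~\ref{thm.d1}, and then combine invariance of domain with properness --- is sound in outline, and the first three steps are correct. The genuine gap is precisely the one you flag: properness is asserted but not established, and the hint via \eqref{d2a}--\eqref{d3} is not a proof. The mixed degenerations (for instance $c_j\to 0$ while $t_j\to 1$ with $c_j/(1-t_j^2)$ held bounded) are the cases requiring real care, and you have not treated them.

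There is, however, a clean way to close this gap using only results already proved in the paper, and it avoids the case analysis entirely. You do not actually need properness of $\Phi$; you only need $\Phi(\mathcal O_N)$ to be closed in $\mathcal I_N$. For this, invoke Theorem~\ref{thm.g1}(ii): if $(\lambda^{(p)},\mu^{(p)})=\Phi(c^{(p)},t^{(p)})\to(\lambda,\mu)\in\mathcal I_N$, then $\xi_{\alpha^{(p)}}\to\xi:=\sum_j\chi_{[\mu_j^2,\lambda_j^2]}$ weakly in $\Xi_+$, and Theorem~\ref{thm.g1}(ii) produces $\alpha\in\elplus$ with $\xi_\alpha=\xi$. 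From \eqref{c2b} one then computes that $\wt\rho_\alpha$ is purely atomic with support exactly $\{\lambda_1^2,\dots,\lambda_N^2\}$, so by Theorem~\ref{thm.b1}(ii) the space $\overline{\Ran\Gamma_\alpha}$ is $N$-dimensional and the non-zero eigenvalues of $\Gamma_\alpha$ are precisely the $\lambda_j$; formula~\eqref{A8} then forces the non-zero eigenvalues of $\Gamma_\talpha$ to be the $\mu_j$. Hence $\alpha$ corresponds to a point of $\mathcal O_N$ and $(\lambda,\mu)\in\Phi(\mathcal O_N)$. With closedness in hand, your invariance-of-domain step gives openness of the image, and connectedness of $\mathcal I_N$ finishes the proof. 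Since the proof of Theorem~\ref{thm.g1} relies only on Theorem~\ref{thm.d1} and elementary compactness, there is no circularity.
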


Note that under the hypothesis of the proposition, we have
$$
\xi_\alpha=\sum_{n=1}^N \chi_{\Delta_n},
\quad
\Delta_n=[\mu_n^2,\lambda_n^2].
$$
This follows from \eqref{A8}.

\begin{proof}[First proof of surjectivity]
Let $\xi\in\Xi_+$, $\supp \xi\subset[0,C]$.

\emph{Step 1:} let us show that there exists a sequence
$\xi^{(p)}\in\Xi_+$, $p\in\bbN$, with the following properties:
\begin{enumerate}[\rm (i)]
\item
$\xi^{(p)}\to\xi$ weakly in $\Xi_+$;
\item
for each $p$, the function $\xi^{(p)}$ has the following structure:
$$
\xi^{(p)}=\sum_{n=1}^N\chi_{\Delta_n},
$$
where $\{\Delta_n\}_{n=1}^N$
is a finite collection of disjoint closed intervals in $(0,C]$.
\end{enumerate}

First note that finite linear combinations of characteristic functions of
intervals are dense in $L^1(0,C)$. Approximating $\xi$ by such
functions, we can obtain a weakly convergent sequence  in $\Xi_+$.
Thus, it suffices to show that if $\xi=A\xi_\Delta$, where
$0<A\leq1$ and $\Delta\subset(0,\infty)$ is a compact interval, then
a sequence $\xi^{(p)}$ as above can be constructed.
The latter statement is easy to check directly, and is known in signal processing as the pulse width modulation method.
 Indeed, let $\Delta=[a,b]$;
set $L=b-a$ and
$$
\Delta_n=\left[a+\tfrac{n-1}{N}L, a+\tfrac{n-1+A}{N}L\right],
\quad
1\leq n\leq N,
$$
and let $\xi^{(N)}=\sum_{n=1}^N \chi_{\Delta_n}$.
Then it is easy to see that $\xi^{(N)}\to A\chi_\Delta$ in $\Xi_+$.

\emph{Step 2:}
Let $\xi^{(p)}$ be as constructed at the previous step.
For any fixed $p$, let us
write the intervals $\Delta_n$ as $\Delta_n=[\mu_n^2,\lambda_n^2]$
and let us use Proposition~\ref{prp.GG}.
We obtain an element $\alpha^{(p)}\in\elplus$ such that $\xi_{\alpha^{(p)}}=\xi^{(p)}$.
Thus, we obtain a sequence of elements $\alpha^{(p)}\in\elplus$ such that
$\xi_{\alpha^{(p)}}\to \xi$ weakly in $\Xi_+$.

By Theorem~\ref{thm.g1}(ii),  there exists $\alpha \in \elplus $ such that $\xi =\xi _\alpha $, as required.
\end{proof}

\subsection{Second proof}
The second proof of surjectivity is heavily based on the construction of \cite{MPT}.

\emph{Step 1:}
Let $\xi\in\Xi_+$, $\supp \xi\subset[0,C]$.
Consider the analytic function
\begin{equation}
\Delta(z)=\exp\left(\int_0^\infty \frac{\xi(\lambda)}{\lambda-z}d\lambda\right),
\quad
z\notin[0,\infty).
\label{e0a}
\end{equation}
For $z$ in the upper half-plane set $z=x+iy$, $y>0$; then, using the
assumption $0\leq \xi\leq 1$, we have
$$
0\leq y\int_0^\infty \frac{\xi(\lambda)}{(\lambda-x)^2+y^2}d\lambda\leq \pi,
$$
and therefore
$$
\Im \Delta(x+iy)
=
\exp\left(\int_0^\infty \frac{(\lambda-x)\xi(\lambda)}{(\lambda-x)^2+y^2}d\lambda\right)
\sin\left(y \int_0^\infty \frac{\xi(\lambda)}{(\lambda-x)^2+y^2}d\lambda\right)\geq 0.
$$
Thus, $\Delta(z)$ is a Herglotz function (analytic function with a positive imaginary part
in the upper half-plane). Further, one has
$$
\Delta(z)=1-\frac1z \int_0^\infty \xi(\lambda)d\lambda+O(\abs{z}^{-2}),
\quad \abs{z}\to\infty.
$$
It follows that $1-\Delta(z)^{-1}$ is also a Herglotz function which satisfies
$$
1-\Delta(z)^{-1}
=
-\frac1z \int_0^\infty \xi(\lambda)d\lambda+O(\abs{z}^{-2}),
\quad \abs{z}\to\infty.
$$
By a Herglotz representation theorem (see e.g.\  \cite[Section 14]{Levin}), we obtain
\begin{equation}
1-\Delta(z)^{-1}
=
\int_{-\infty}^\infty \frac{d\nu(\lambda)}{\lambda-z},
\label{e0}
\end{equation}
where $\nu\geq0$ is a finite measure on $\bbR$.
Since $\Delta(z)$ is analytic in $\bbC\setminus [0,C]$, we get that
$\supp\nu\subset[0,C]$.
Let $z=-\eps$:
$$
\int_0^\infty \frac{d\nu(\lambda)}{\lambda+\eps}
=
1-\Delta(-\eps)^{-1}
=
1-
\exp\left(-\int_0^\infty \frac{\xi(\lambda)}{\lambda+\eps}d\lambda\right)<1.
$$
It follows that
$$
\int_0^\infty \frac{d\nu(\lambda)}{\lambda}\leq 1.
$$
In particular, this means that $\nu(\{0\})=0$.
Set $d\wt\rho(\lambda)=\lambda^{-1}d\nu(\lambda)$.
Then $\wt\rho$ is a non-negative finite measure with a compact support in $[0,\infty)$
and $\wt\rho(\bbR)\leq 1$.
Rewriting the integral representation \eqref{e0} for $\Delta$ in terms of $\wt\rho$, we obtain
(cf. \eqref{c2b})
\begin{equation}
1-\Delta(z)^{-1}=\int_0^\infty \frac{\lambda d\wt\rho(\lambda)}{\lambda-z}.
\label{e1}
\end{equation}

\emph{Step 2:}
Consider the Hilbert space $L^2(\bbR,d\wt\rho)$ and the operator $M_x$ of multiplication
by the independent variable in this space.
It what follows, it is important that $\wt\rho(\{0\})=0$ and therefore $\Ker M_x$ is trivial.
Set
$$
H_0=M_x^{1/2},
\quad
H=(H_0^2 -(\cdot, H_0\1)H_0\1)^{1/2}.
$$
Below we prove that there exists a
bounded Hankel operator $\Gamma_\alpha$ in $\ell^2$
with $\Gamma_\alpha\geq0$, $\Gamma_\talpha\geq0$ and an isometry
\begin{equation}
U: L^2(\bbR,d\wt\rho)\to \ell^2
\quad \text{ with } \quad
\Ran U=\overline{\Ran \Gamma_\alpha}
\label{e2}
\end{equation}
such that
\begin{equation}
H_0=U^*\wt\Gamma_\alpha U,
\quad
H=U^*\wt\Gamma_\talpha U
\label{e3}
\end{equation}
(compare with \eqref{d6}, \eqref{d4}, \eqref{d5}).

Assume that such operators $\Gamma_\alpha$ and $U$ have already been found.
By \eqref{e1}, we have
$$
\Delta(z)^{-1}=1-((H_0^2-z)^{-1}H_0\1,H_0\1),
$$
and therefore, reversing the steps in calculation \eqref{c0},
$$
\Delta(z)
=
\det((H_0^2-z)(H^2-z)^{-1})
=
\Delta_{H_0^2/H^2}(z).
$$
Thus, from \eqref{e0a} and \eqref{A5}, we find
$$
\xi(\lambda)=\xi(\lambda; H_0^2, H^2).
$$
By \eqref{e3}, this yields $\xi=\xi_\alpha$, as required.

\emph{Step 3:}
Now we need to construct $\Gamma_\alpha$ and $U$ satisfying \eqref{e2} and \eqref{e3}.
The rest of the proof repeats almost verbatim the arguments of \cite[Section III.3]{MPT}.
From the definition of $H$, we have
$$
\norm{H f}^2=(H^2 f,f)\leq (H_0^2 f,f)=\norm{H_0 f}^2
$$
for any $f$. Thus, there exists a contraction $\Sigma_0$ such that
$\Sigma_0 H_0 = H$.
Let $\Sigma=\Sigma_0^*$; then
$$
H_0\Sigma =H.
$$
From here we get
$$
H_0^2-(\cdot,H_0\1)H_0\1
=
H_0\Sigma\Sigma^* H_0,
$$
and therefore we obtain
$$
\Sigma\Sigma^*=I-(\cdot,\1)\1.
$$
For any $f\in L^2(\bbR,d\wt\rho)$ we have
$$
\norm{f}^2=\abs{(f,\1)}^2+\norm{\Sigma^* f}^2.
$$
Iterating this, we obtain
\begin{equation}
\norm{f}^2
=
\sum_{n=0}^\infty \abs{(f,\Sigma^n\1)}^2
+
\lim_{n\to\infty}\norm{(\Sigma^*)^n f}^2,
\label{e4}
\end{equation}
where the limit necessarily exists and the series necessarily converges.
In order to complete the proof, we need
\begin{lemma}\label{lma.e1}
For any $f\in L^2(\bbR,d\wt\rho)$, we have
$$
\lim_{n\to\infty}\norm{(\Sigma^*)^n f}=0.
$$
\end{lemma}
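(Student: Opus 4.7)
The plan is to reformulate the conclusion as a density statement and then exploit the specific intertwining among $H_0$, $H$, and $\Sigma$. An elementary induction on the identity $\Sigma\Sigma^*=I-(\cdot,\mathbbm{1})\mathbbm{1}$ yields
\[
\Sigma^n(\Sigma^*)^n = I-\sum_{k=0}^{n-1}(\cdot,\Sigma^k\mathbbm{1})\Sigma^k\mathbbm{1},
\]
so that $\|(\Sigma^*)^n f\|^2=\|f\|^2-\sum_{k=0}^{n-1}|(f,\Sigma^k\mathbbm{1})|^2$. Hence the lemma is equivalent to the density of $V:=\overline{\Span\{\Sigma^k\mathbbm{1}:k\ge 0\}}$ in $L^2(\bbR,d\wt\rho)$.

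I would argue this density by contradiction. Suppose $g\ne 0$ is orthogonal to every $\Sigma^k\mathbbm{1}$, and set $g_n=(\Sigma^*)^n g$. The hypothesis becomes $g_n\perp\mathbbm{1}$ for all $n$, and then $\Sigma\Sigma^*=I-P_1$ yields $\Sigma g_{n+1}=g_n$ and $\|g_n\|=\|g\|$. Applying $H_0$ to $\Sigma g_{n+1}=g_n$ and using $H_0\Sigma=H$ gives the central intertwining
\[
H g_{n+1}=H_0 g_n,\qquad n\ge 0.
\]

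To drive this to a contradiction I would combine the above with the additional structural identity $\Sigma^* H_0=H_0\Sigma$, obtained by taking adjoints in $H_0\Sigma=H$ and using self-adjointness of $H$. Iterating it produces $(\Sigma^*)^n H_0=H_0\Sigma^n$, so that the $\Sigma^*$-orbit of $H_0\mathbbm{1}$ coincides with $\{H_0\Sigma^n\mathbbm{1}:n\ge 0\}$. Together with the rank-one identity $H_0^2=H^2+(\cdot,H_0\mathbbm{1})H_0\mathbbm{1}$ and the cyclicity of $\mathbbm{1}$ for $M_x=H_0^2$ on $L^2(\bbR,d\wt\rho)$---which holds by construction, $\wt\rho$ being the spectral measure of $M_x$ at $\mathbbm{1}$---one should be able to transfer the $M_x$-cyclicity of $\mathbbm{1}$ into the density of $\{\Sigma^k\mathbbm{1}\}$ in $L^2(\bbR,d\wt\rho)$, contradicting the existence of a nonzero $g\in V^\perp$.

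The main obstacle will be making this transfer rigorous without circularity. Already placing $H_0\mathbbm{1}$ inside $V$ is essentially tantamount to showing that $V$ is $H_0$-invariant, which is already close to what one is ultimately trying to prove. The MPT argument circumvents this by constructing the desired Hankel operator $\Gamma_\alpha$ and isometry $U$ in tandem with the density assertion: one \emph{defines} the candidate sequence by $\alpha_n=(H_0\Sigma^n\mathbbm{1},\mathbbm{1})$, verifies via Proposition~\ref{prp.b1} that $\alpha\in\elplus$, and then exploits the Hankel pattern $(H\Sigma^n\mathbbm{1},\Sigma^m\mathbbm{1})=(H\mathbbm{1},\Sigma^{n+m}\mathbbm{1})$ emerging in the (possibly non-orthogonal) system $\{\Sigma^n\mathbbm{1}\}$ to close the argument.
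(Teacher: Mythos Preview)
Your reformulation of the lemma as the density of $V=\overline{\Span\{\Sigma^k\mathbbm{1}:k\ge0\}}$ is correct, and the intertwining $\Sigma^*H_0=H_0\Sigma$ that you derive from $H_0\Sigma=H=\Sigma^*H_0$ is valid. However, the proposal does not actually close the argument: you yourself identify the obstruction (placing $H_0\mathbbm{1}$ in $V$, or equivalently showing $V$ is $H_0$-invariant, is essentially what has to be proved), and the final paragraph is a sketch of a workaround rather than a proof. Moreover, your description of how ``the MPT argument circumvents this'' is not what the paper does for this lemma: the paper does \emph{not} construct $\Gamma_\alpha$ and verify $\alpha\in\elplus$ first in order to prove the lemma; Lemma~\ref{lma.e1} is established independently, and only afterwards is $\alpha$ defined and $U$ shown to be an isometry.

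The paper's proof avoids the circularity by passing to square roots. From $H^2\le H_0^2$ and the Heinz inequality one gets $H^{1/2}\le H_0^{1/2}$, hence a contraction $Q$ with $H^{1/2}=QH_0^{1/2}$. The key identity becomes
\[
Q^*QH_0Q^*Q=H_0-(\cdot,H_0^{1/2}\mathbbm{1})H_0^{1/2}\mathbbm{1},
\]
from which one reads off that $\Ker(Q^*Q-I)$ is $H_0$-invariant and orthogonal to $H_0^{1/2}\mathbbm{1}$; by cyclicity of $\mathbbm{1}$ for $M_x$ this kernel is trivial. Since $0\le Q^*Q\le I$ with $1$ not an eigenvalue, the spectral theorem gives $(Q^*Q)^n\to0$ strongly. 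Finally the intertwining $H_0^{1/2}(Q^*Q)=\Sigma^*H_0^{1/2}$ (from $H=H_0^{1/2}Q^*QH_0^{1/2}=\Sigma^*H_0$) iterates to $H_0^{1/2}(Q^*Q)^n=(\Sigma^*)^nH_0^{1/2}$, yielding $\|(\Sigma^*)^nf\|\to0$ first on $\Ran H_0^{1/2}$ and then, by density and $\|\Sigma^*\|\le1$, everywhere. The point is that the invariance argument works cleanly for $\Ker(Q^*Q-I)$ precisely because $Q^*Q$ is self-adjoint, whereas for $V^\perp$ you only have $\Sigma^*$-invariance, and $\Sigma$ is not normal.
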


This lemma will be proven at the end of the section.

\emph{Step 4:}
Assuming Lemma~\ref{lma.e1}, let us complete the proof of the theorem.
Consider the map $U:L^2(\bbR,d\wt\rho)\to\ell^2$ given by
$$
f\mapsto\{(f,\Sigma^n \1)\}_{n=0}^\infty.
$$
By \eqref{e4} and Lemma~\ref{lma.e1}, this map is an isometry.
Set
$$
\alpha_n=((\Sigma^*)^nH_0\1,\1).
$$
If $\{e_n\}_{n=0}^\infty$ is the standard basis in $\ell^2$,
we have (by the definition of $U$)
$$
U^* e_n=\Sigma^n \1.
$$
Thus,
$$
(UH_0U^* e_n, e_m)
=
(H_0\Sigma^n \1,\Sigma^m\1).
$$
Using $H_0\Sigma=\Sigma^* H_0$, we get
$$
(H_0\Sigma^n\1,\Sigma^m\1)
=
((\Sigma^*)^n H_0\1,\Sigma^m\1)
=
\alpha_{n+m}.
$$
Similarly, we get
$$
(UH U^*e_n,e_m)
=
(H \Sigma^n \1, \Sigma^m \1)
=
(H_0\Sigma^{n+1}\1,\Sigma^m \1)
=
\alpha_{n+m+1}
=
(\talpha)_{n+m}.
$$
Thus, we obtain
\begin{equation}
UH_0U^*=\Gamma_\alpha,
\quad
UH U^*=\Gamma_\talpha.
\label{e9}
\end{equation}
Since $H_0\geq0$ and $H\geq0$, the double positivity condition for $\Gamma_\alpha$ holds true.
Since $U$ is an isometry, multiplying \eqref{e9} by $U^*$ on the left and by $U$ on the right
gives \eqref{e3}. Finally,
since $\Ker H_0=\{0\}$, from \eqref{e9} we obtain $\Ker \Gamma_\alpha=\Ker U^*$.
This gives the condition $\Ran U=\overline{\Ran \Gamma_\alpha}$.
\qed

\begin{proof}[Proof of Lemma~\ref{lma.e1}]
This is borrowed almost verbatim from \cite{MPT}.

By definition, we have $H^2\leq H_0^2$.
By the Heinz inequality (see e.g.\  \cite[Section~10.4]{BS})
$H^{1/2}\leq H_0^{1/2}$, and therefore
there exists a (unique) contraction $Q$ in $L^2(\bbR,d\wt\rho)$
with
$$
H^{1/2}=Q H_0^{1/2}.
$$
Let us prove that
\begin{equation}
\Ker (Q^*Q-I)=\{0\}.
\label{e5}
\end{equation}
We have
\begin{gather*}
H^2=(H_0^{1/2} Q^*QH_0^{1/2})^2=H_0^{1/2}Q^*QH_0Q^*QH_0^{1/2},
\\
H^2=H_0^2-(\cdot,H_0\1)H_0\1=H_0^{1/2}(H_0-(\cdot,H_0^{1/2}\1)H_0^{1/2}\1)H_0^{1/2}.
\end{gather*}
It follows that
\begin{equation}
Q^*QH_0Q^*Q =H_0-(\cdot,H_0^{1/2}\1)H_0^{1/2}\1,
\label{e6}
\end{equation}
and so
\begin{equation}
(H_0Q^*Qf,Q^*Qf) =(H_0f,f)-\abs{(f,H_0^{1/2}\1)}^2
\label{e7}
\end{equation}
for any $f\in L^2(\bbR,d\wt\rho)$.
We claim that $\Ker (Q^*Q-I)$ is an invariant subspace of $H_0$.
Indeed, if $Q^*Qf=f$, then by \eqref{e7} we have
$f\perp H_0^{1/2}\1$ and so by \eqref{e6}
$$
Q^*QH_0f=H_0f,
$$
i.e.\  $H_0f\in\Ker(Q^*Q-I)$.
Thus, $\Ker (Q^*Q-I)$ is an invariant subspace of $H_0$
which is orthogonal to $H_0^{1/2}\1$.
Then it is orthogonal to the minimal invariant subspace
of $H_0$ that contains $H_0^{1/2}\1$.
Recalling that $H_0=M_x^{1/2}$ we see  (by an approximation argument based on the
Weierstrass theorem) that such minimal subspace coincides with the whole
space $L^2(\bbR,d\wt\rho)$.
Thus we get \eqref{e5}.

Since $0\leq Q^*Q\leq I$ and $\Ker (Q^*Q-I)=\{0\}$,
by the spectral theorem for self-adjoint operators we get
\begin{equation}
\lim_{n\to\infty}
\norm{(Q^*Q)^nf}=0
\quad \forall f\in L^2(\bbR,d\wt\rho).
\label{e8}
\end{equation}
Next, we have
$$
H=H_0^{1/2}Q^*QH_0^{1/2}=\Sigma^*H_0^{1/2}H_0^{1/2},
$$
and therefore
$$
H_0^{1/2} Q^*Q=\Sigma^*H_0^{1/2}.
$$
Iterating the last identity, we obtain
$$
H_0^{1/2}(Q^*Q)^n=(\Sigma^*)^nH_0^{1/2},
$$
and so \eqref{e8} implies that
$$
\lim_{n\to\infty}\norm{(\Sigma^*)^n f}=0
\qquad
\forall f\in\Ran H_0^{1/2}.
$$
Since $\norm{\Sigma^*}\leq1$, by the density
argument we obtain that the last relation in fact holds true for all $f\in L^2(\bbR,d\wt\rho)$.
\end{proof}

\section{Example}\label{sec.f}

Fix a parameter $\gamma>-1/2$. Let
\begin{equation}
\alpha_n=\frac1{n+1+\gamma},
\quad
n=0,1,2,\dots
\label{f1}
\end{equation}
This sequence corresponds to the choice $d\omega(t)=t^\gamma dt$ in \eqref{a10}.
Consider the Hankel operator $\Gamma_\alpha$.
An explicit diagonalisation of $\Gamma_\alpha$ was given by M.~Rosenblum
in \cite{R1}.
This diagonalisation shows, in particular, that the spectrum of $\Gamma_\alpha$
(for any $\gamma>-1/2$) coincides with the interval $[0,\pi]$, is purely absolutely
continuous and has multiplicity one.
Since $S^*\alpha$ also has the form \eqref{f1} with $\gamma$ incremented by $1$,
this yields a whole class of Hankel operators $\Gamma_\alpha$ with identical
spectra of $\Gamma_\alpha$ and $\Gamma_{S^*\alpha}$.
This shows that, unlike in the case of compact Hankel operators, in general the spectra of
$\Gamma_\alpha$ and $\Gamma_{S^*\alpha}$ together do not determine $\alpha$.

Below we give an explicit formula for the measure $\rho_\alpha$
corresponding to the sequence \eqref{f1}.
In order to do this, let us recall Rosenblum's diagonalisation of $\Gamma_\alpha$.
For $k<\frac12+\Re m$, let $W_{k,m}$ be the Whittaker function  (see e.g.\ \cite[Chapter 6]{BE}):
\begin{equation}
\Gamma(m-k+\tfrac12) W_{k,m}(x)
=
x^{m+\frac12}e^{-x/2}
\int_0^\infty e^{-xt} (t+1)^{m+k-\frac12}t^{m-k-\frac12}dt.
\label{f3}
\end{equation}
For $s>0$ and  $f\in L^2(\bbR_+)$, set
$$
(\calU f)(s)=\lim_{\eps\to+0}\int_\eps^\infty \frac1x W_{-\gamma,i\sqrt{s}}(x)f(x)dx.
$$
Then $\calU$ is a unitary operator from $L^2(\bbR_+,dx)$
onto $L^2(\bbR_+,d\mu(s))$, where
$$
d\mu(s)=\frac1{2\pi^2}\sinh(2\pi\sqrt{s})\abs{\Gamma(\tfrac12+\gamma-i\sqrt{s})}^2ds.
$$
Further, for $n=0,1,2,\dots$,
let $L_n$ be the Laguerre polynomial normalised such that the functions
$\phi_n(x)=e^{-x/2}L_n(x)$ form an orthonormal basis in $L^2(\bbR_+)$.
Define a map $V:\ell^2\to L^2(\bbR_+,d\mu(s))$ by setting
$$
(Va)(s)=\calU(\textstyle{\sum_{n=0}^\infty a_n \phi_n}),
\quad
\forall a=\{a_n\}_{n=0}^\infty\in \ell^2.
$$
Then (see \cite{R1} for the proof) $V$ is unitary and $V$ transforms
$\Gamma_\alpha$ into a multiplication operator:
$$
(V\Gamma_\alpha V^{-1} g)(s)=\frac{\pi}{\cosh(\pi \sqrt{s})} g(s).
$$
Thus, we obtain
\begin{equation}
\rho_\alpha(\delta)=(\chi_\delta(\Gamma_\alpha^2)e_0,e_0)
=
\int_0^\infty \chi_\delta\left(\frac{\pi^2}{(\cosh(\pi \sqrt{s}))^2}\right)
\abs{(Ve_0)(s)}^2 d\mu(s),
\label{f2}
\end{equation}
where
\begin{equation}
(Ve_0)(s)=(U\phi_0)(s)=\lim_{\eps\to+0}
\int_\eps^\infty \frac1x W_{-\gamma,i\sqrt{s}}(x)e^{-x/2}dx.
\label{f4}
\end{equation}
The measure $\rho_\alpha$ is purely absolutely continuous and
is supported on the interval $[0,\pi^2]$.
Formula \eqref{f2} can be rewritten as
$$
d\rho_\alpha(\lambda)
=
\frac{1}{\pi}
\abs{(Ve_0)(s)}^2
\abs{\Gamma(\tfrac12+\gamma-i\sqrt{s})}^2
\frac{\sqrt{s}}{\lambda^2}d\lambda,
\quad 0<\lambda<\pi^2,
$$
where the variables $s$ and $\lambda$ are related by
$$
\lambda=\left(\frac{\pi}{\cosh(\pi \sqrt{s})}\right)^2.
$$
In the case $\gamma=0$, the measure $\rho_\alpha$  and even the
corresponding function $\xi_\alpha$ can be computed explicitly.

\begin{lemma}\label{lma.f1}
Let $\alpha$ be defined by \eqref{f1} with $\gamma=0$.
Then $\supp\rho_\alpha=\supp\xi_\alpha=[0,\pi^2]$.
For $\lambda\in(0,\pi^2)$ we have
\begin{equation}
d\rho_\alpha(\lambda)
=
\frac1{\pi^2\sqrt{\lambda}}\cosh^{-1}\left(\tfrac{\pi}{\sqrt{\lambda}}\right)
=
\frac1{\pi^2\sqrt{\lambda}}\log\left(\tfrac{\pi}{\sqrt{\lambda}}+\sqrt{\tfrac{\pi^2}{\lambda}-1}\right)d\lambda
\label{f7}
\end{equation}
and
\begin{equation}
\xi_\alpha(\lambda)
=
\tfrac1\pi \tan^{-1}\left(\tfrac{2}{\pi}\cosh^{-1}\left(\tfrac{\pi}{\sqrt{\lambda}}\right)\right)
=
\tfrac1\pi \tan^{-1}\left(\tfrac{2}{\pi}\log\left(\tfrac{\pi}{\sqrt{\lambda}}+\sqrt{\tfrac{\pi^2}{\lambda}-1}\right)\right).
\label{f6}
\end{equation}
\end{lemma}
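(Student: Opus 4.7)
The plan is to establish \eqref{f7} first, by specialising Rosenblum's diagonalisation to $\gamma=0$, and then to deduce \eqref{f6} from \eqref{f7} via the identity \eqref{c2}. For the density of $\rho_\alpha$, formula \eqref{f2} exhibits $\rho_\alpha$ as the push-forward of $\abs{(Ve_0)(s)}^2 d\mu(s)$ under the map $s\mapsto \pi^2/\cosh^2(\pi\sqrt{s})$. At $\gamma=0$ the Whittaker function in \eqref{f3} reduces to a Macdonald function via the classical identity $W_{0,i\sqrt{s}}(x)=\sqrt{x/\pi}\,K_{i\sqrt{s}}(x/2)$, so the substitution $u=x/2$ turns \eqref{f4} into a Mellin-type integral
\[
(Ve_0)(s)=\sqrt{\tfrac{2}{\pi}}\int_0^\infty u^{-1/2}e^{-u}K_{i\sqrt{s}}(u)\,du=\frac{\pi}{\cosh(\pi\sqrt{s})},
\]
where I use the tabulated formula $\int_0^\infty x^{\mu-1}e^{-x}K_\nu(x)\,dx=\sqrt{\pi}\,2^{-\mu}\Gamma(\mu+\nu)\Gamma(\mu-\nu)/\Gamma(\mu+\tfrac12)$ at $\mu=\tfrac12$, $\nu=i\sqrt{s}$, together with $\abs{\Gamma(\tfrac12+i\sqrt{s})}^2=\pi/\cosh(\pi\sqrt{s})$. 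Combined with the elementary simplification $d\mu(s)=\pi^{-1}\sinh(\pi\sqrt{s})\,ds$ (via $\sinh(2x)=2\sinh x\cosh x$), the change of variable $\lambda=\pi^2/\cosh^2(\pi\sqrt{s})$ in \eqref{f2} yields \eqref{f7}; the support statement $\supp\rho_\alpha=[0,\pi^2]$ follows at once since the density is strictly positive on $(0,\pi^2)$.

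For $\xi_\alpha$ I would take boundary values in \eqref{c2} at $z=\lambda_0+i0$ with $\lambda_0\in(0,\pi^2)$. Writing $F(z):=\int \xi_\alpha(\mu)/(\mu-z)\,d\mu$, Sokhotski-Plemelj gives $F(\lambda_0+i0)=A(\lambda_0)+i\pi\xi_\alpha(\lambda_0)$ for some real $A(\lambda_0)$, so that $\tan(\pi\xi_\alpha(\lambda_0))$ equals the ratio of $-\Im$ over $\Re$ of the boundary value of $-\lambda_0\int d\rho_\alpha(\mu)/(\mu-\lambda_0-i0)$. The substitution $v=\pi/\sqrt{\mu}$ in the Cauchy transform of $\rho_\alpha$ gives $\int d\rho_\alpha/(\mu-z)=(2/\pi)\int_1^\infty \cosh^{-1}(v)/(\pi^2-zv^2)\,dv$, and Sokhotski-Plemelj at the simple zero $v_\ast=\pi/\sqrt{\lambda_0}$ of the denominator produces the imaginary part $-\sqrt{\lambda_0}\cosh^{-1}(\pi/\sqrt{\lambda_0})/\pi$. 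Comparing with the tangent of the right-hand side of \eqref{f6}, the claim reduces to the single principal-value identity
\[
\mathrm{p.v.}\!\int_1^\infty \frac{\cosh^{-1}(v)}{\pi^2-\lambda_0 v^2}\,dv=-\frac{\pi}{4\sqrt{\lambda_0}},\qquad \lambda_0\in(0,\pi^2).
\]

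The main obstacle is this last identity. My plan is to write $\cosh^{-1}(v)=\int_1^v dt/\sqrt{t^2-1}$, interchange the order of integration (handling the principal value by symmetric truncation around $v_\ast$), and evaluate the inner integral in closed form: with $\tau=\pi/\sqrt{\lambda_0}>1$, a direct computation gives $\mathrm{p.v.}\!\int_t^\infty dv/(\tau^2-v^2)=(2\tau)^{-1}\log(\abs{\tau-t}/(\tau+t))$. Setting $t=\cosh\theta$ and $\tau=\cosh\phi$ and using the product formulas $\cosh\phi-\cosh\theta=2\sinh\tfrac{\phi+\theta}{2}\sinh\tfrac{\phi-\theta}{2}$ and $\cosh\phi+\cosh\theta=2\cosh\tfrac{\phi+\theta}{2}\cosh\tfrac{\phi-\theta}{2}$, the integrand in $\theta$ becomes $\log\abs{\tanh\tfrac{\phi-\theta}{2}}+\log\tanh\tfrac{\phi+\theta}{2}$. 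Suitable linear shifts of variable in each summand, together with the evenness of $\log\abs{\tanh(\cdot)}$, collapse the $\theta$-integral to $4\int_0^\infty \log\tanh u\,du=-\pi^2/2$, where the final evaluation uses $q=e^{-2u}$, the expansion $\log\tfrac{1-q}{1+q}=-2\sum_{k\ge 0}q^{2k+1}/(2k+1)$, and $\sum_{k\ge 0}(2k+1)^{-2}=\pi^2/8$. This delivers the required value $-\pi/(4\sqrt{\lambda_0})$ and completes the verification of \eqref{f6}.
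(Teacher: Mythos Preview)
Your argument is correct. For \eqref{f7} the two proofs are essentially the same computation packaged differently: the paper substitutes the Whittaker integral representation \eqref{f3} into \eqref{f4} and evaluates the resulting double integral by Fubini, while you invoke the identity $W_{0,\nu}(x)=\sqrt{x/\pi}\,K_\nu(x/2)$ and a tabulated Mellin transform of $e^{-x}K_\nu(x)$; both land on $(Ve_0)(s)=\pi/\cosh(\pi\sqrt{s})$ and then use the reflection formula for $\Gamma$.

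For \eqref{f6} the routes genuinely differ. The paper first evaluates the full Cauchy transform in closed form, proving
\[
-z\int_0^{\pi^2}\frac{d\rho_\alpha(\lambda)}{\lambda-z}=\frac{\sinh^{-1}\zeta}{\zeta},\qquad \zeta=\frac{\pi}{\sqrt{-z}},
\]
by the change of variable $x=\cosh^{-1}(\pi/\sqrt{\lambda})$ followed by a residue computation (shifting the contour $\bbR\to i\pi+\bbR$ for $x\sinh x/((\cosh x)^2+\zeta^2)$). It then reads off $\xi_\alpha$ from the boundary value of this explicit analytic function. You instead take boundary values first: Sokhotski--Plemelj gives the imaginary part of the Cauchy transform of $\rho_\alpha$ for free (it is $\pi$ times the density), and the real part is the principal-value integral that you reduce, by interchanging with the representation $\cosh^{-1}v=\int_1^v(t^2-1)^{-1/2}dt$ and hyperbolic substitutions, to $4\int_0^\infty\log\tanh u\,du=-\pi^2/2$. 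The paper's approach yields more --- the closed-form identity \eqref{f5} for $\Delta(z)^{-1}$ off the spectrum, not merely its boundary values --- at the cost of a contour-integration step; your approach is purely real-variable and more elementary, trading the residue calculus for a Fubini with a principal value (which is justified since the resulting inner integral $\tfrac{1}{2\tau}\log\tfrac{\abs{\tau-t}}{\tau+t}$ has only a logarithmic singularity at $t=\tau$ and $O(1/t)$ decay, so the $t$-integral against $(t^2-1)^{-1/2}$ is absolutely convergent).
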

\begin{proof}
Substituting the integral representation \eqref{f3} into \eqref{f4}, we find
\begin{multline*}
(Ve_0)(s)
=
\frac{1}{\Gamma(\tfrac12+i\sqrt{s})}
\int_0^\infty x^{i\sqrt{s}-\frac12}e^{-x}
\int_0^\infty e^{-xt}(t+1)^{i\sqrt{s}-\frac12}t^{-i\sqrt{s}-\frac12}dt\, dx
\\
=
\frac{1}{\Gamma(\tfrac12+i\sqrt{s})}
\int_0^\infty (t+1)^{i\sqrt{s}-\frac12}t^{-i\sqrt{s}-\frac12}
\int_0^\infty x^{i\sqrt{s}-\frac12}e^{-x(t+1)} dx\, dt
\\
=
\int_0^\infty (t+1)^{-1}t^{-i\sqrt{s}-\frac12} dt
=
\frac{\pi}{\cosh(\pi \sqrt{s})}.
\end{multline*}
Further, by the reflection formula for Gamma function, we have
$$
\abs{\Gamma(\tfrac12-i\sqrt{s})}^2 = \frac{\pi}{\cosh(\pi \sqrt{s})}.
$$
Thus, we obtain \eqref{f7}.

Next, for  $\arg z\in(0,2\pi)$, denote
$$
\zeta=\frac{\pi}{\sqrt{-z}},
$$
where the branch of the square root is defined so that $\sqrt{-z}>0$ for $z<0$.
Let us prove the formula
\begin{equation}
\exp\left(-\int_0^\infty \frac{\xi_\alpha(\lambda)}{\lambda-z}d\lambda\right)
=
\frac{\sinh^{-1}(\zeta)}{\zeta}
=
\frac{\log(\zeta+\sqrt{\zeta^2+1})}{\zeta}.
\label{f5}
\end{equation}

By \eqref{c2}, we have
\begin{multline*}
\Delta(z)^{-1}
=
\exp\left(-\int_0^\infty \frac{\xi_\alpha(\lambda)}{\lambda-z}d\lambda\right)
=
-z((\Gamma_\alpha^2-z)^{-1}e_0,e_0)
\\
=
-z\int_0^{\infty}(\lambda-z)^{-1}d\rho_\alpha(\lambda)
=
-\frac{z}{\pi^2}
\int_0^{\pi^2}\frac1{(\lambda-z)\sqrt{\lambda}}
\cosh^{-1}(\pi/\sqrt{\lambda})
d\lambda.
\end{multline*}
By a change of variable $x=\cosh^{-1}(\pi/\sqrt{\lambda})$, this transforms into
$$
\Delta(z)^{-1}
=
\frac{2}{\pi}
\int_0^\infty \frac{x\sinh(x)}{(\cosh(x))^2-\frac{\pi^2}{z}}dx
=
\frac{1}{\pi}
\int_{-\infty}^\infty \frac{x\sinh(x)}{(\cosh(x))^2+\zeta^2}dx.
$$
In order to compute the last integral, we write
$$
\pi\Delta(z)^{-1}
=
 \int_{-\infty}^\infty f(x) dx,
\quad
f(x)=\frac{x\sinh(x)}{(\cosh(x))^2+\zeta^2},
$$
and regard it as an integral of the complex variable.
The function $f(x)$, $x\in\bbC$, has poles at $x=i(\pi/2)\pm\sinh^{-1}(\zeta)+2\pi i n$, $n\in\bbZ$.
Let us move the contour of integration from $\bbR$ to $i\pi+\bbR$.
We notice that
$$
\int_{-\infty}^\infty f(x+\pi i )dx=-\int_{-\infty}^\infty f(x)dx.
$$
Moving the contour, we pick up the residues of $f$ at $x_\pm=i(\pi/2)\pm\sinh^{-1}(\zeta)$, and so
by a direct calculation we obtain
$$
2 \int_{-\infty}^\infty f(x) dx
=
2\pi i (\Res_{x_-}f(x)+\Res_{x_+}f(x))
=
2\pi\frac{\sinh^{-1}(\zeta)}{\zeta},
$$
which yields \eqref{f5}.

Now let us prove formula \eqref{f6} for $\xi_\alpha$.
Since $\supp \rho_\alpha\subset[0,\pi^2]$, we also have
$\supp \xi_\alpha\subset[0,\pi^2]$. Denote
$$
F(z)=\int_0^{\pi^2} \frac{\xi_\alpha(\lambda)}{\lambda-z}dz.
$$
By \eqref{f7}, we have
$$
F(z)=-\log\frac{\sinh^{-1}(\zeta)}{\zeta},
\quad
\zeta=\frac{\pi}{\sqrt{-z}}.
$$
Fix $\lambda_0\in(0,\pi^2)$ and let $z=\lambda_0+i0$.
Then
$$
\zeta=i\zeta_0,
\text{ where }
\zeta_0=\frac{\pi}{\sqrt{\lambda_0}}
\text{ and }
\sqrt{\lambda_0}>0.
$$
We have
$$
\xi_\alpha(\lambda_0)
=
\frac1\pi\Im F(\lambda_0+i0)
=
-\frac1\pi \arg\frac{\sinh^{-1}(i\zeta_0)}{i\zeta_0}.
$$
Now it remains to compute the r.h.s.:
$$
\xi_\alpha(\lambda_0)
=
-\frac1\pi \arg\frac{i(\pi/2)+\cosh^{-1}(\zeta_0)}{i\zeta_0}
=
\frac1\pi \arg((\pi/2)+i\cosh^{-1}(\zeta_0)),
$$
which yields \eqref{f6}.
\end{proof}

\appendix
\section{Background information on the SSF theory}
Here, for the reader's convenience, we collect key formulas of the SSF theory
without proofs or references or much discussion. For the details and history we refer
to the survey \cite{BYa} or the book
\cite{Ya}.

Let $A$, $B$ be bounded self-adjiont operators in a Hilbert space.
Assume that $B-A$ is a trace class operator.
Then there exists a real valued function
$\xi\in L^1(\bbR)$ such that the \emph{Lifshits-Krein} trace formula holds true:
\begin{equation}
\Tr(\varphi(B)-\varphi(A))
=
\int_{-\infty}^\infty \xi(\lambda)\varphi'(\lambda)d\lambda,
\quad
\forall \varphi\in C_0^\infty(\bbR).
\label{A1}
\end{equation}
(It is easy to prove that for any $\varphi\in C_0^\infty(\bbR)$, the difference
$\varphi(B)-\varphi(A)$ is a trace class operator.)
This function is called the \emph{spectral shift function} (SSF) for the pair $A$, $B$;
notation: $\xi(\lambda)=\xi(\lambda;B,A)$.
The SSF has the following properties:
\begin{enumerate}[(i)]
\item
If $\pm(B-A)\geq0$, then $\pm\xi(\lambda;B,A)\geq0$ for a.e.\ $\lambda\in\bbR$.

\item
If $\rank(B-A)\leq n$, then $\abs{\xi(\lambda;B,A)}\leq n$ for a.e.\  $\lambda\in\bbR$.

\item
In particular, if $B-A=(\cdot,x)x$ for some element $x$, then $\xi(\lambda;B,A)\in[0,1]$
for  a.e.\  $\lambda\in\bbR$.

\item
One has the estimate
$$
\int_{-\infty}^\infty \abs{\xi(\lambda;B,A)}d\lambda\leq \norm{B-A}_1,
$$
where $\norm{\cdot}_1$ is the trace norm.

\item
One has the identity
\begin{equation}
\int_{-\infty}^\infty \xi(\lambda;B,A)d\lambda=\Tr(B-A).
\label{A3}
\end{equation}

\end{enumerate}
Finally, the SSF is compactly supported; this is a consequence of the boundedness of $B$ and $A$.

If $A$, $B$ are compact operators, then the SSF can be expressed in terms of the eigenvalue
counting functions of $A$, $B$. Denote
$$
N_+(\lambda;A)=\Tr(\chi_{(\lambda,\infty)}(A)), \quad \lambda>0;
$$
then
\begin{equation}
\xi(\lambda;B,A)=N_+(\lambda;B)-N_+(\lambda;A), \quad \lambda>0,
\label{A8}
\end{equation}
with a similar formula for $\lambda<0$. This is a direct consequence of the trace formula \eqref{A1}.

The SSF for the pair $A$, $B$ is closely related to the perturbation determinant $\Delta_{B/A}$
for this pair. The perturbation determinant is defined by
\begin{equation}
\Delta_{B/A}(z)
=
\det(I+(B-A)(A-z)^{-1}),
\label{A4}
\end{equation}
where $z$ is a complex number outside the spectrum of $A$.
The perturbation determinant is an analytic function of $z$, with poles at isolated
eigenvalues of $A$ and zeros at isolated eigenvalues of $B$ (and may have more complicated
singularities at the essential spectra of $A$, $B$). One has
\begin{equation}
\Delta_{B/A}(z)
=
\exp\left\{\int_{-\infty}^\infty \frac{\xi(\lambda)}{\lambda-z}d\lambda\right\}.
\label{A5}
\end{equation}

Finally, we would like to display a resolvent formula for rank one perturbations.
Let $B=A+\gamma(\cdot,x)x$, where $x$ is an element of the Hilbert space and $\gamma\in\bbR$.
Then for all $z$ with $\Im z\not=0$ we have
\begin{align}
(B-z)^{-1}&=(A-z)^{-1}-\frac{\gamma}{D(z)}(\cdot,(A-\overline{z})^{-1}x)(A-z)^{-1}x,
\label{A6}
\\
D(z)&=1+\gamma((A-z)^{-1}x,x).
\label{A7}
\end{align}

\section*{Acknowledgements}
Much of the work on the paper was done during P.G.'s visit to King's College London
and A.P.'s visit to University of Paris-Sud in 2013. The authors are grateful to
both Universities for financial support.

\end{document}